\newcommand{\iopartonly}[1]{#1}
\newcommand{\otheronly}[1]{}
\newcommand{\iopartonly}[1]{}
\newcommand{\otheronly}[1]{#1}
\newcommand{\english}{
\theoremstyle{plain}
\newtheorem{theorem}{Theorem}[section]
\newtheorem*{theorem*}{Theorem}
\newtheorem{lemma}[theorem]{Lemma}
\newtheorem*{lemma*}{Lemma}
\newtheorem{proposition}[theorem]{Proposition}
\newtheorem*{proposition*}{Proposition}
\newtheorem{corollary}[theorem]{Corollary}
\newtheorem*{corollary*}{Corollary}
\newtheorem{conjecture}[theorem]{Conjecture}
\newtheorem*{conjecture*}{Conjecture}
\theoremstyle{definition}
\newtheorem{definition}[theorem]{Definition}
\newtheorem*{definition*}{Definition}
\newtheorem{example}[theorem]{Example}
\newtheorem*{example*}{Example}
\newtheorem{exercise}[theorem]{Exercise}
\newtheorem*{exercise*}{Exercise}
\newtheorem{assumption}[theorem]{Assumption}
\newtheorem*{assumption*}{Assumption}
\newtheorem{notation}[theorem]{Notation}
\newtheorem*{notation*}{Notation}
\newtheorem{problem}[theorem]{Problem}
\newtheorem*{problem*}{Problem}
\theoremstyle{remark}
\newtheorem*{remark}{Remark}
\newtheorem*{claim}{Claim}
\newtheorem*{acknowledgement}{Acknowledgement}
}
\newcommand{\beq}{\begin{equation}}
\newcommand{\eeq}{\end{equation}}
\newcommand{\beqn}{\begin{equation}}
\newcommand{\eeqn}{\end{equation}}
\newcommand{\beqa}{\begin{eqnarray}}
\newcommand{\eeqa}{\end{eqnarray}}
\newcommand{\beqan}{\begin{eqnarray}}
\newcommand{\eeqan}{\end{eqnarray}}
\newcommand{\ben}{\begin{enumerate}}
\newcommand{\een}{\end{enumerate}}
\newcommand{\bit}{\begin{itemize}}
\newcommand{\eit}{\end{itemize}}
\newcommand{\bce}{\begin{center}}
\newcommand{\ece}{\end{center}}
\newcommand{\bsli}{\outlineonly{\begin{slide}}}
\newcommand{\esli}{\outlineonly{\end{slide}}}
\newcommand{\od}[2]{\frac{d{#1}}{d{#2}}}
\newcommand{\pd}[2]{\frac{\partial{#1}}{\partial{#2}}}
\newcommand{\RR}{\mathbb{R}}
\newcommand{\officialpaper}{
\def\isofficialpaper{}
\newcommand{\job}[1]{}
\newcommand{\future}[1]{}
\newcommand{\obs}[1]{}
\newcommand{\labelP}[1]{\label{##1}}
\newcommand{\question}[1]{}
\newcommand{\answer}[1]{}
\newcommand{\authoronly}[1]{}      
\newcommand{\officialonly}[1]{##1} 
\newcommand{\outlineonly}[1]{}     
}
\newcommand{\ignore}[1]{}
\title{Absolutely continuous copulas with prescribed support constructed by differential equations, with an application in toxicology}
\author{Oscar Bj{\"o}rnham\\FOI CBRN Defence and Security\\{\tt oscar.bjornham@foi.se} \and
Niklas Br{\"a}nnstr{\"o}m\\FOI CBRN Defence and Security\\{\tt niklas.brannstrom@foi.se} \and
Leif Persson\\ Mathematics Department, Ume{\aa} University\\{\tt leif.persson@umu.se}}
\begin{document}

\maketitle
\begin{abstract}
A new method for constructing absolutely continuous two--dimensional copulas by differential equations is presented. The copulas are symmetric with respect to reflection in the opposite diagonal. The support of the copula density may be prescribed to arbitrary opposite symmetric hypographs of invertible functions, containing the diagonal. The method is applied to toxicological probit modeling, where new compatibility conditions for the probit parameters are derived. 
\end{abstract}

\section{Introduction and main results}\label{sec:Introduction}
This paper is motivated by the following result, which is probably well known, although we have not been able to find any explicit statement or proof:
\begin{proposition}\label{pro:linearConstraintStandardNormal}
	Suppose that $a,\Delta\in\RR, a>0$. Then there exists random variables $X,Y$ satisfying 
	\begin{equation}\label{eqn:linearConstraintStandardNormal}
		Y\leq a X + \Delta\text{ and }X, Y\text{ standard normal}
	\end{equation}
	if and only if $a=1$ and $\Delta\geq 0$, and then if $\Delta>0$, there exists $X,Y$ with absolutely continuous joint distribution satisfying (\ref{eqn:linearConstraintStandardNormal}). 
\end{proposition}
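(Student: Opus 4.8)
The plan is to split the statement into three pieces: the \emph{only if} direction, bare existence in the \emph{if} direction, and finally absolute continuity, treating them in turn.

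First I would prove necessity, which I expect to carry the real mathematical content. The idea is to turn the almost sure inequality (\ref{eqn:linearConstraintStandardNormal}) into a comparison of one-dimensional distribution functions. If $Y\le aX+\Delta$ almost surely, then for every $t\in\RR$ the event $\{aX+\Delta<t\}$ is contained, up to a null set, in $\{Y<t\}$; taking probabilities and using $a>0$ gives $\Phi((t-\Delta)/a)\le\Phi(t)$ for all $t$, where $\Phi$ is the standard normal distribution function. Since $\Phi$ is continuous and strictly increasing, this is equivalent to $(t-\Delta)/a\le t$, i.e. $t(1-a)\le\Delta$, for all $t\in\RR$. The left-hand side is unbounded above unless $a=1$, which forces $a=1$; substituting then yields $0\le\Delta$. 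This settles the \emph{only if} part.

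For the converse with $a=1$ and $\Delta\ge 0$, the comonotone coupling $Y=X$ works immediately: $Y=X\le X+\Delta$ and both are standard normal. However, this joint law is carried by the diagonal and is therefore singular, so it says nothing about the final absolute continuity claim. For $\Delta>0$ I would pass to copula coordinates. Writing $X=\Phi^{-1}(U)$ and $Y=\Phi^{-1}(V)$ with $U,V$ uniform on $[0,1]$, the constraint $Y\le X+\Delta$ becomes $V\le h(U)$ with $h(u)=\Phi(\Phi^{-1}(u)+\Delta)$. Because $\Delta>0$, the map $h$ is a homeomorphism of $[0,1]$ fixing $0$ and $1$ with $h(u)>u$ on $(0,1)$, so its hypograph contains the diagonal in its interior; a short computation using $\Phi^{-1}(1-u)=-\Phi^{-1}(u)$ also shows this hypograph is symmetric under reflection in the opposite diagonal. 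Producing the desired $X,Y$ thus reduces exactly to constructing an absolutely continuous copula supported on this hypograph, which is precisely the class of objects the differential-equation method of this paper is built to deliver.

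I expect this last step to be the main obstacle, and it is genuinely the reason the paper is needed. The obvious candidates all fail: the comonotone coupling is singular; the Gaussian copula with correlation near $1$ has full support and hence charges the forbidden region $\{v>h(u)\}$ with positive probability; and an additive model $Y=X+D$ with $D$ bounded and independent of $X$ is impossible, since equating characteristic functions forces $\widehat{D}\equiv 1$, i.e. $D=0$ almost surely. Hence genuine dependence keyed to the boundary curve is unavoidable, and the crux is to exhibit a density that vanishes above $v=h(u)$ while keeping both marginals uniform; I would obtain it from the construction developed in the remainder of the paper.
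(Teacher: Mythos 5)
Your proposal is correct and follows essentially the same route as the paper: necessity via the CDF comparison $\Phi((t-\Delta)/a)\le\Phi(t)$ forcing $a=1$ and $\Delta\ge0$, the singular coupling $Y=X$ for bare existence, and deferral to the paper's copula construction (Example \ref{exm:Gaussian}, built on Theorem \ref{thm:main} with $H(u)=\Phi(\Phi^{-1}(u)+\Delta)$) for the absolutely continuous case. The only omission relative to the paper is the explicit verification that this $H$ satisfies the integral hypotheses (\ref{eqn:mainHcond1})--(\ref{eqn:mainHcond2}), which the paper carries out inside the cited example.
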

A proof is given at the end of this section. Our interests in this result comes from applications in toxicological probit modeling, accounted for in Section \ref{sec:Toxicology} where we prove new compatibility conditions for toxicological probit models. For simulation purposes, we are also interested in \emph{constructing} absolute continuous distributions of Proposition \ref{pro:linearConstraintStandardNormal}:
\begin{problem}\label{prb:standardNormalRestricted}
	Given a number $\Delta>0$, construct a pair of standard normal random variables $X,Y$ with absolutely continuous joint distribution supported on $y\leq x+\Delta$. 
\end{problem}
This seems to be a very simple and basic problem in probability theory, but to our surprise we could not find any simple constructions in the literature. Independent standard normal $X,Y$ have absolutely continuous joint distribution but do not fullfill the support condition, and truncating to $y\leq x+\Delta$ yields non--normal marginals. It is easy to construct singular solutions to the problem, the simplest being $X=Y$. The difficulty lies in imposing the absolute continuity.
We reduce Problem \ref{prb:standardNormalRestricted} to a problem of the dependence structure, or \emph{copula} of $(X,Y)$. Before we state our main result, let us briefly review the main facts about copulas.

A function $C:[0,1]^2\to[0,1]$ is said to be a copula if $C(u,0)=C(0,v)=0$, $C(u,1)=u$, $C(1,v)=v$ and $C(u_2,v_2)-C(u_2,v_1)-C(u_1,v_2)+C(u_1,v_1)\geq 0$ for all $u,v,u_1,v_1,u_1,v_2\in[0,1]$ such that $u_1\leq u_2, v_1\leq v_2$, cf. \cite[Definition 2.2.2]{Nelsen1999}.
By Sklar's theorem (\cite[Theorem 2.3.3]{Nelsen1999}), the cumulative distribution function (CDF) $F_{X,Y}$ of any bivariate random variable $(X,Y)$ is representable by the marginal CDF's $F_X, F_Y$ and a copula $C$ as
\begin{equation}\label{eqn:Copula}
	F_{X,Y}(x,y)=C(F_X(x),F_Y(y)).
\end{equation}
This may be regarded as a change of variables $X=F^{-1}_X(U), Y=F^{-1}_Y(V)$ such that $(U,V)$ has uniform marginals. The copula $C$ is uniquely defined on Range$(F_X)\times$Range$(F_Y)$ for all bivariate random variables $(X,Y)$, and if $F_X,F_Y$ are continuous, $C$ is uniquely defined on $[0,1]^2$. Morover, the partial derivatives $C'_u,C'_v,C''_{uv}$ of a copula $C(u,v)$ are defined almost everywhere on $[0,1]^2$ (\cite[Theorem 2.2.7]{Nelsen1999}) and $C''_{uv}\geq 0$. If $\iint C''_{uv}dudv=1$, $C$ is said to be \emph{absolutely continuous}. Copulas are common in statistical modeling, in particular mathematical finance. The main benefit of copulas is that by Sklar's theorem, the marginal statistics and dependence structure can be modeled separately. For an introduction to copulas we refer to \cite{Nelsen1999}, for a recent review see \cite{Flores_etal2017}.

Returning to Problem \ref{prb:standardNormalRestricted}, the half--plane $\{(x,y):y\leq x+\Delta\} $ is symmetric with respect to reflection $(x,y)\mapsto(-y,-x)$ through the line $x+y=0$. Therefore, we assume that $(X,Y)$ and $(-Y,-X)$ are equal in distribution. Moreover, $X,-X,Y,-Y$ are all identically distributed so it follows (from Theorem \ref{thm:oppositeRadialSymmetry} below) that the copula $C(u,v)$ of $(X,Y)$ is opposite symmetric, according to the following definition.
\begin{definition}\label{def:oppsiteSymmetryC}
	A copula $C$ is said to be \emph{opposite symmetric} if
	\begin{equation}\label{eqn:oppositeSymmetryC}
	C(u,v)=C(1-v,1-u)+u+v-1
\end{equation}
for all $(u,v)\in[0,1]^2$.
\end{definition}
Opposite symmetry means symmetry with respect to reflection $(u,v)\mapsto(1-v,1-u)$ in the opposite diagonal $u+v=1$, and was introduced in \cite{DeBaetsDeMeyerUbeda-Flores2009}. Applying the copula transformation, using the standard normal CDF $\Phi$:
 \begin{equation}
 	u=\Phi(x), v=\Phi(y), F_{X,Y}(x,y)=C(u,v),{}
 \end{equation}
 Problem \ref{prb:standardNormalRestricted} reduces to finding an absolutely continuous opposite symmetric copula $C(u,v)$ with density supported on $\{(u,v)\in[0,1]^2:v\leq H(u)\}$ where
 \begin{equation}\label{eqn:GaussianH}
 	H(u)= \Phi(\Phi^{-1}(u)+\Delta).
 \end{equation}
 Our main result is the construction of $C(u,v)$ in the following Theorem \ref{thm:main}. We want to emphasize its simplicity, involving $H$ and its inverse \emph{explicitly}. The crucial part is the evaluation of the integral in (\ref{eqn:mainG}), which is suitable for numerical integration if not analytically integrable.
\begin{theorem}\label{thm:main}
	Assume that $H:[0,1]\to[0,1]$ is a bijective function such that
	\begin{equation}\label{eqn:Hcond}
		H(u)+H^{-1}(1-u)=1\text{ and } H(u)\geq u,\quad u\in[0,1],
	\end{equation}
	\begin{equation}\label{eqn:u0cond}
		u_0\in(0,1/2)\text{ and }H(u_0)=1-u_0,
	\end{equation}
	\begin{equation}\label{eqn:mainHcond1}
		\int_{u_0}^u \frac{dz}{H(z)-z}<\infty,\quad u\in [u_0,1)
	\end{equation}
	and
	\begin{equation}\label{eqn:mainHcond2}
		\lim_{u\nearrow 1}\int_{u_0}^u \frac{dz}{H(z)-z}=\infty.
	\end{equation}
	Let
	\begin{equation}\label{eqn:mainG}
		G(v)=\exp\left(-\int_{u_0}^{1-v}\frac{dz}{H(z)-z}\right), \quad v\in[0,1-u_0],
	\end{equation}
	\begin{equation}\label{eqn:mainK}
		K(u)=\frac{H(u)-u}{G(1-u)}-1+2u_0, \quad u\in[u_0,1]
	\end{equation}
	and
	\begin{equation}\label{eqn:mainF}
		F(u)=(1-2u_0)(1-G(1-u)),\quad u\in[u_0,1].
	\end{equation}
	Define $C(u,v)$ by
	\begin{enumerate}
 		\item If $0<u\leq u_0$ and $0\leq v\leq H(u)$, then
 		\begin{equation}\label{eqn:mainC1}
 			C(u,v)=H^{-1}(v)+(K(1-v)-K(1-H(u)))G(v).
 		\end{equation}
 		\item If $0<u\leq u_0$ and $H(u)<v\leq 1-u$ then 
 		\begin{equation}\label{eqn:mainC2}
 			C(u,v)=u
 		\end{equation}
 		\item If $u_0<u<1$ and $0\leq v\leq 1-u$ then
 		\begin{equation}\label{eqn:mainC3}
 			C(u,v)=H^{-1}(v)+(K(1-v)+F(u))G(v).
 		\end{equation}
 		\item If $0<u<1$ and $u+v>1$ then $C(u,v)$ is defined by (\ref{eqn:oppositeSymmetryC}).
 	\end{enumerate}
 	Then $C(u,v)$ is an absolutely continuous opposite symmetric copula with probability density supported on $v\leq H(u)$.
\end{theorem}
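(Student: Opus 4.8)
The plan is to verify, one property at a time, that the piecewise function $C$ is (a) continuous, (b) grounded with uniform margins, (c) opposite symmetric, (d) $2$-increasing, and (e) absolutely continuous with density supported on $v\le H(u)$. Before anything else I would record the differential identities that drive the construction. Differentiating (\ref{eqn:mainG}) gives $G'(v)=G(v)/\bigl(H(1-v)-(1-v)\bigr)$, and short computations then yield $K'(u)=H'(u)/G(1-u)$ and $F'(u)=(1-2u_0)\,G(1-u)/(H(u)-u)$. The first half of (\ref{eqn:Hcond}) is equivalent to the reflection identity $H^{-1}(1-u)=1-H(u)$, hence $H(1-H(u))=1-u$; differentiating the latter gives the crucial relation $H'(u)\,H'(1-H(u))=1$. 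I would also note the normalizing values $G(1-u_0)=1$, $G(0)=0$ (the latter from the divergence condition (\ref{eqn:mainHcond2})), and $K(u_0)=F(u_0)=0$.

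With these in hand the elementary properties are quick. Continuity across $v=H(u)$ holds because the bracket in (\ref{eqn:mainC1}) vanishes there; continuity across $u=u_0$ holds because $K(u_0)=F(u_0)=0$ makes (\ref{eqn:mainC1}) and (\ref{eqn:mainC3}) agree; and continuity across $u+v=1$ is automatic since the reflection $(u,v)\mapsto(1-v,1-u)$ fixes that line. Groundedness $C(u,0)=C(0,v)=0$ follows from $G(0)=0$ and $H^{-1}(0)=0$ (with the constant value (\ref{eqn:mainC2}) giving $C=u\to0$ as $u\to0$), while the margins $C(1,v)=v$ and $C(u,1)=u$ follow directly from the region-$4$ definition (\ref{eqn:oppositeSymmetryC}) together with groundedness. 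Opposite symmetry is essentially built in: region $4$ is defined by (\ref{eqn:oppositeSymmetryC}), the reflection is an involution fixing the value, so the relation propagates to the lower triangle, and single-valuedness on $u+v=1$ is immediate.

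The analytic heart of the theorem is absolute continuity, and this is where I expect the real work. In each open region I would compute the density: $C''_{uv}=0$ on (\ref{eqn:mainC2}), while on (\ref{eqn:mainC1}) and (\ref{eqn:mainC3}) the identities above give $C''_{uv}=K'(1-H(u))H'(u)G'(v)$ and $C''_{uv}=F'(u)G'(v)$ respectively, both manifestly $\ge0$ since $G>0$, $H'>0$, $1-2u_0>0$ and $H(u)-u\ge0$; region $4$ inherits nonnegativity by symmetry, giving the $2$-increasing property. Nonnegativity alone does \emph{not} give absolute continuity: one must rule out singular mass concentrated on the interface curves $v=H(u)$ and $u+v=1$, a genuine danger since comonotone-type copulas park all their mass on exactly such curves. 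The way to exclude this is to show $\nabla C$ is continuous across every interface, so that $C$ is $C^1$ and its mixed partial is a true density with no Dirac part. Across $v=H(u)$ one checks $C'_u\to1$ and $C'_v\to0$ from the region-$1$ side, matching (\ref{eqn:mainC2}), where the cancellations use precisely $H'(u)H'(1-H(u))=1$. Across the antidiagonal the key identity is $C'_u+C'_v=1$ on $v=1-u$, which I would verify from (\ref{eqn:mainC3}) by inserting the derivative formulas for $G,K,F$ and watching the $u_0$-dependent terms cancel against $F$, leaving exactly $1$; this matches the reflected one-sided limits from region $4$. The vertical interface $u=u_0$ needs only continuity of $C'_v$ (a jump in $C'_u$ across a vertical line produces no singular mass), which follows again from $K(u_0)=F(u_0)=0$. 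This gradient-matching is the main obstacle, and it works only because $G$ solves the ordinary differential equation implicit in (\ref{eqn:mainG}) and $H$ satisfies the reflection symmetry (\ref{eqn:Hcond}).

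Once no singular part survives, the total density integrates to $C(1,1)-C(1,0)-C(0,1)+C(0,0)=1$, so $\iint C''_{uv}\,du\,dv=1$ and $C$ is absolutely continuous. For the support statement I would show that $\{v\le H(u)\}$ is invariant under $(u,v)\mapsto(1-v,1-u)$: indeed $v\le H(u)\iff 1-u\le H(1-v)$, using $1-H^{-1}(v)=H(1-v)$ (a restatement of (\ref{eqn:Hcond})). Since the density vanishes on (\ref{eqn:mainC2}) (where $v>H(u)$) and, on (\ref{eqn:mainC3}), every point satisfies $v\le 1-u<H(u)$ for $u>u_0$, the lower-triangle density lives in $\{v\le H(u)\}$; by reflection invariance so does the region-$4$ density, which gives the claim.
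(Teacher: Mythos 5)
Your proof is correct, but it takes a genuinely different route from the paper. The paper obtains Theorem \ref{thm:main} as a specialization of its general machinery: Theorem \ref{thm:copulaCsymmetry} reduces ``absolutely continuous opposite symmetric copula'' to the single marginal condition $C'_u(u,1)\equiv1$ for a $C$ \emph{defined} as the double integral of a reflection-symmetric density ansatz; Theorem \ref{thm:prescribed} turns that condition into the ODE (\ref{eqn:prescribedODE}); Theorems \ref{thm:prescribed2}--\ref{thm:prescribed3} solve it by an integrating factor; and the actual proof of Theorem \ref{thm:main} is then five lines, choosing $L(u)=H(u)-u$ so that $1+L'-H'\equiv0$ kills the positivity integral and yields the closed forms (\ref{eqn:mainK})--(\ref{eqn:mainF}). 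In that route absolute continuity is free, because $C$ is an integral of a density by construction. You go in the opposite direction: starting from the closed-form piecewise $C$, you differentiate, and the price is exactly the step you correctly single out as the heart of the matter --- excluding singular mass on the interfaces $v=H(u)$ and $u+v=1$ by matching $\nabla C$ across them. Your verifications are right: the identities $G'(1-u)=G(1-u)/(H(u)-u)$, $K'(u)=H'(u)/G(1-u)$, $H'(u)H'(1-H(u))=1$ do make $C'_u\to1$, $C'_v\to0$ on $v=H(u)$, and your antidiagonal condition $C'_u+C'_v=1$ is precisely the paper's ODE (\ref{eqn:prescribedODE}) evaluated at the specific $F,G,K$ --- so the two proofs meet at the same computation, reached from opposite ends. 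Two small points to tighten in a full write-up: nonnegativity of the density on each open region does not by itself give the $2$-increasing property for rectangles straddling an interface, so you should present $V_C(R)=\iint_R C''_{uv}$ (which your gradient-matching delivers, since $C'_u(u,\cdot)$ is then absolutely continuous in $v$) as the single step yielding both $2$-increasingness and absolute continuity, rather than listing them separately; and groundedness at $v=0$ involves the indeterminate product $K(1)G(0)$, resolved by rewriting $K(1-v)G(v)=H(1-v)-(1-v)+(2u_0-1)G(v)$. What your approach buys is a self-contained proof that makes visible where each hypothesis enters; what the paper's buys is that the same argument simultaneously produces the whole family of copulas parametrized by $L$ in Theorem \ref{thm:prescribed3}, of which (\ref{eqn:mainC1})--(\ref{eqn:mainC3}) is the one distinguished member.
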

Note that the hypograph $v\leq H(u)$ is opposite symmetric if and only if (\ref{eqn:Hcond}) holds true. The copula is piecewisely defined, on parts of the unit square depicted in Figure \ref{fig:SquarePartsMain}.
\begin{figure}[H]\label{fig:SquarePartsMain}
	\includegraphics[width=0.8\textwidth]{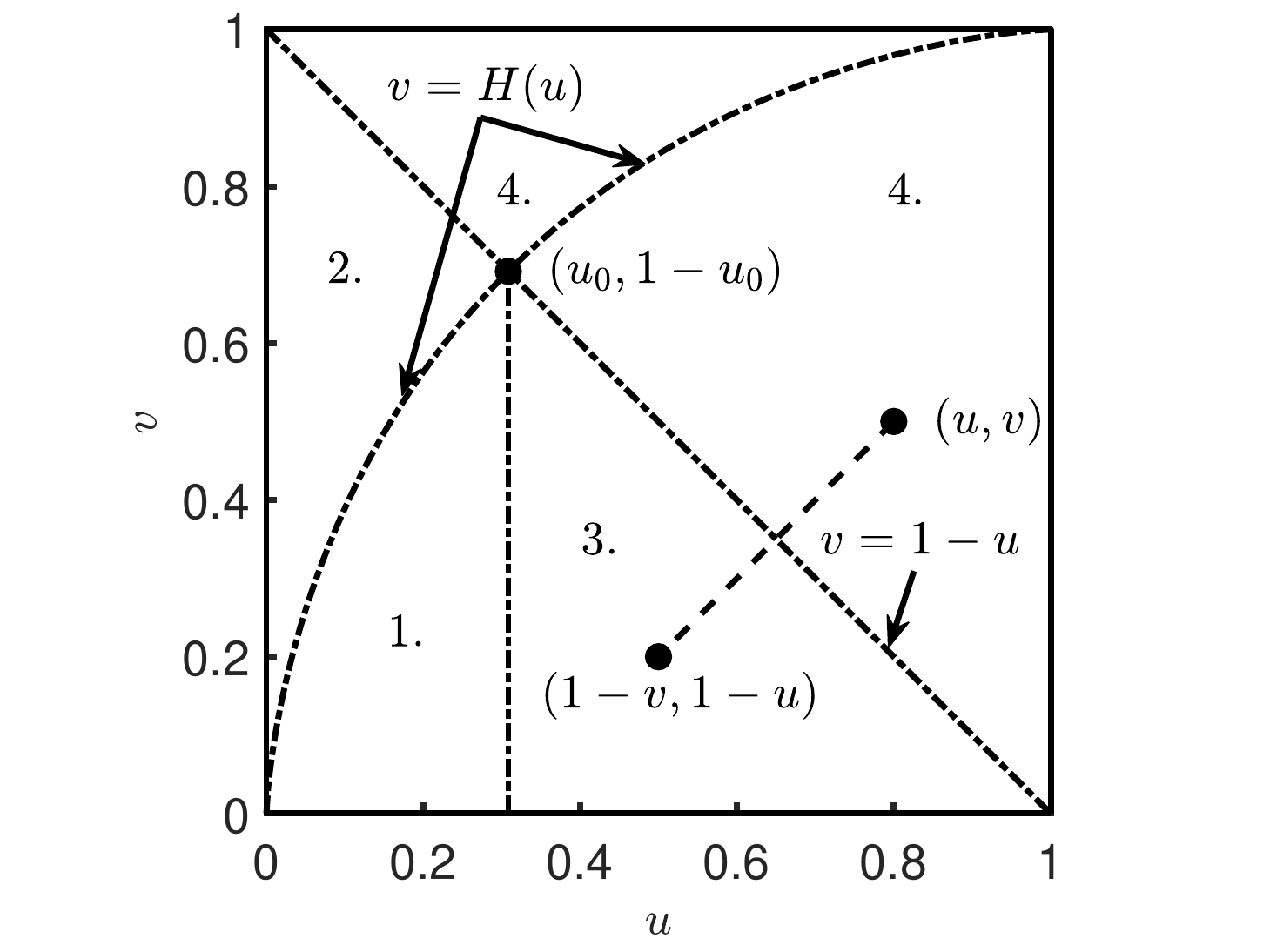}
	\caption{Parts of the unit square for piecewise definition of the copula in Theorem \ref{thm:main}.}
\end{figure}
Theorem \ref{thm:main} is proved at then end of Section \ref{sec:prescribed}. Before that, we develop a theory for construction of opposite symmetric copulas by differential equations in Section \ref{sec:DifferentialEquations} and Section \ref{sec:prescribed}, which we believe is of interest in its own right, and gives in fact a much larger class of copulas than Theorem \ref{thm:main}. In section \ref{sec:Comparison} we compare our method to two other methods in the literature, Durantes and Jaworskis construction of absolutely continuous copulas with given diagonal section \cite{DuranteJaworski2008}, and Jaworskis characterization of copulas using differential equations \cite{Jaworski2014}. In Section \ref{sec:sampling} we adapt our differential equation method to sampling from the copula. We conclude the paper with section \ref{sec:Toxicology}, an application in toxicological probit modeling, where new compatibility conditions for the probit coefficients are derived. 
\begin{example}\label{exm:Gaussian}
	In this example we construct a solution to Problem \ref{prb:standardNormalRestricted} using Theorem \ref{thm:main}. Let $\Phi$ be the standard normal CDF, $\phi(x)=\Phi'(x)$ the standard normal probability density function (PDF), $\Delta>0$ and $H$ given by (\ref{eqn:GaussianH}). Then $H^{-1}(v)=\Phi(\Phi^{-1}(v)-\Delta)$
	and because of the symmetries $\Phi(x)+\Phi(-x)=1$, $\Phi^{-1}(u)+\Phi^{-1}(1-u)=0$, condition (\ref{eqn:Hcond}) is satisfied, and 
	\begin{equation}
		u_0=\Phi(-\Delta/2).
	\end{equation}
	Moreover, with the change of variables $z=\Phi(w)$ and the mean value theorem we obtain
	\begin{multline}
		\int_{u_0}^{u}\frac{dz}{H(z)-z}=\int_{-\Delta/2}^{\Phi^{-1}(u)}\frac{\phi(w)dw}{\Phi(w+\Delta)-\Phi(w)}\\
		=\int_{-\Delta/2}^{\Phi^{-1}(u)}\frac{\phi(w)dw}{\phi(w+\theta(w)\Delta)}
		=\frac1{\sqrt{2\pi}}\int_{-\Delta/2}^{\Phi^{-1}(u)}\exp\left(w\Delta\theta(w)-\frac{\Delta^2\theta(w)^2}2\right)dw
	\end{multline}
	for some function $\theta(w)$ with $0\leq\theta(w)\leq1$, so
	\begin{equation}
		\frac1{\sqrt{2\pi}\Delta}\left(e^{\Delta\Phi^{-1}(u)}-e^{\Delta^2/2}\right)\geq\int_{u_0}^{u}\frac{dz}{H(z)-z}\geq \frac{e^{-\Delta^2/2}}{\sqrt{2\pi}}\left(\Phi^{-1}(u)+\frac{\Delta}2\right)
	\end{equation}
	which proves that conditions (\ref{eqn:mainHcond1}) and (\ref{eqn:mainHcond2}) are satisfied. The function $G$ defined by equation (\ref{eqn:mainG}) can not be expressed in terms of special functions (to our knowledge), but can be determined by numerical integration, and $C(u,v)$ is then determined by equations (\ref{eqn:oppositeSymmetryC}) and (\ref{eqn:mainC1})-(\ref{eqn:mainC3}). The density of $C$ is illustrated in figure \ref{fig:Cuv}. The joint PDF of $(X,Y)$ is given by
	\begin{equation}
		p(x,y)=C''_{uv}(\Phi(x),\Phi(y))\phi(x)\phi(y)
	\end{equation}
	and is illustrated in figure \ref{fig:p}. Here, $G(v)$ is computed with the  MATLAB\textsuperscript{\textregistered} function {\tt integral } at $400$ uniformly distributed grid points on $[\epsilon, 1-u_0]$, and computed at intermediate points on $[\epsilon,1-u_0]$ by spline interpolation, where $\epsilon=10^{-11}$. Consequently, the copula and its density is computed on $[\epsilon, 1-\epsilon]^2$. 
	\begin{figure}[H]\label{fig:Cuv}
	$\begin{array}{rl}
	\includegraphics[width=0.5\textwidth]{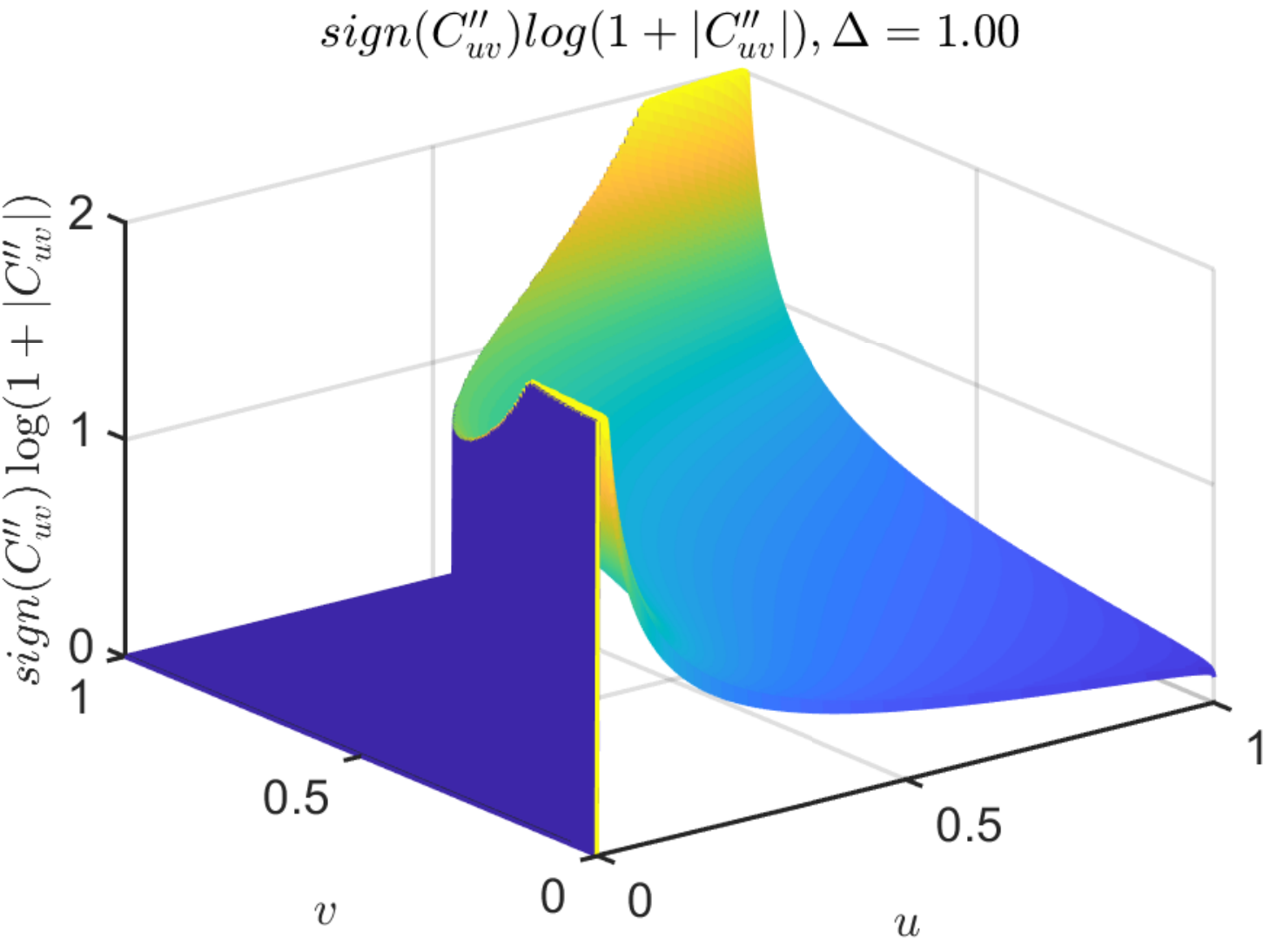}&
	\includegraphics[width=0.5\textwidth]{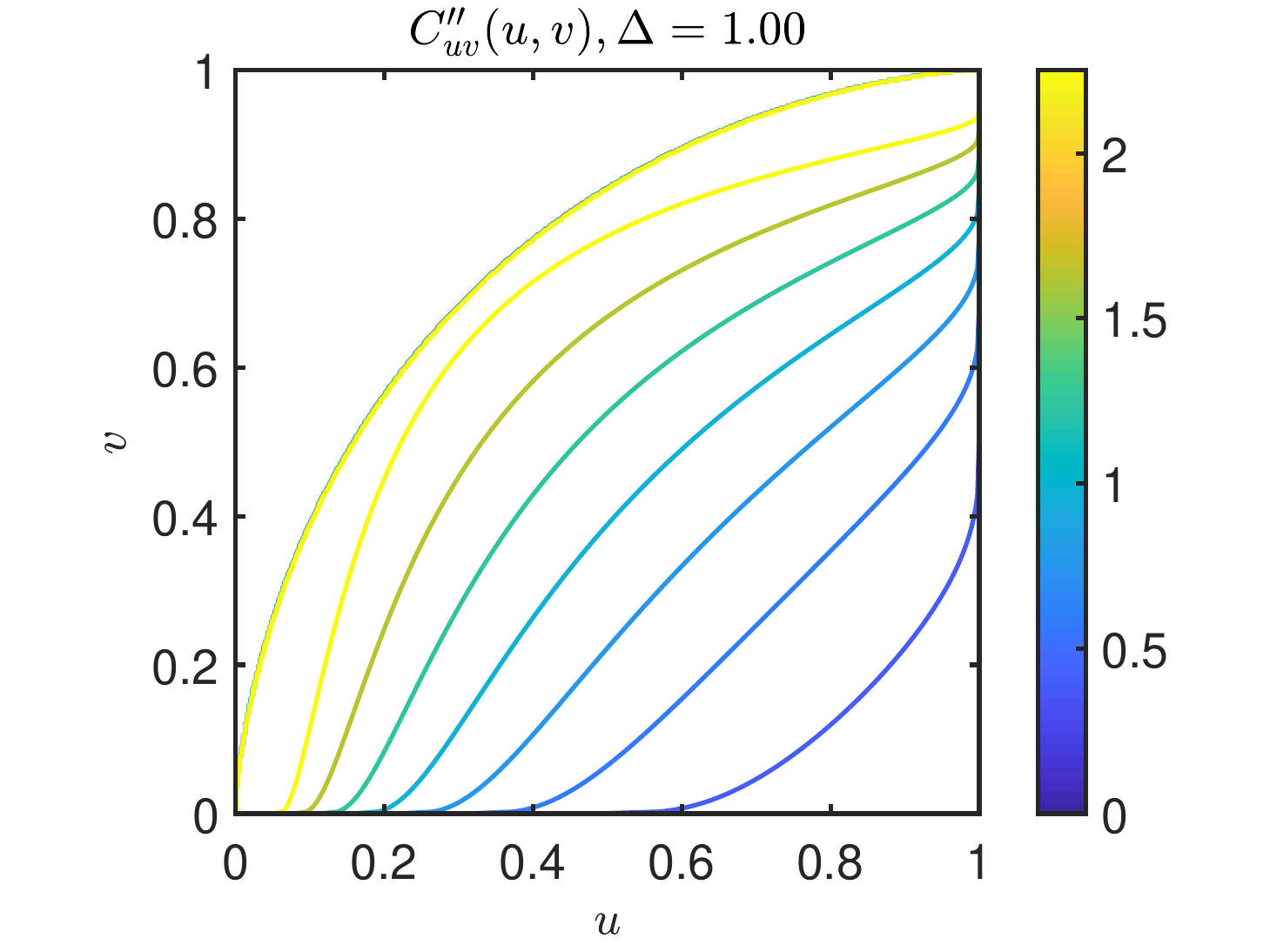}
	\end{array}$
	\caption{Copula density $C''_{uv}(u,v)$ for Example \ref{exm:Gaussian}, $\Delta=1$. The density is discontinuous on the curve $v=H(u)$ and tends to infinity when approaching $(0,0)$ or $(1,1)$.}
	\end{figure}
	\begin{figure}[H]\label{fig:p}
	$\begin{array}{rl}
	\includegraphics[width=0.5\textwidth]{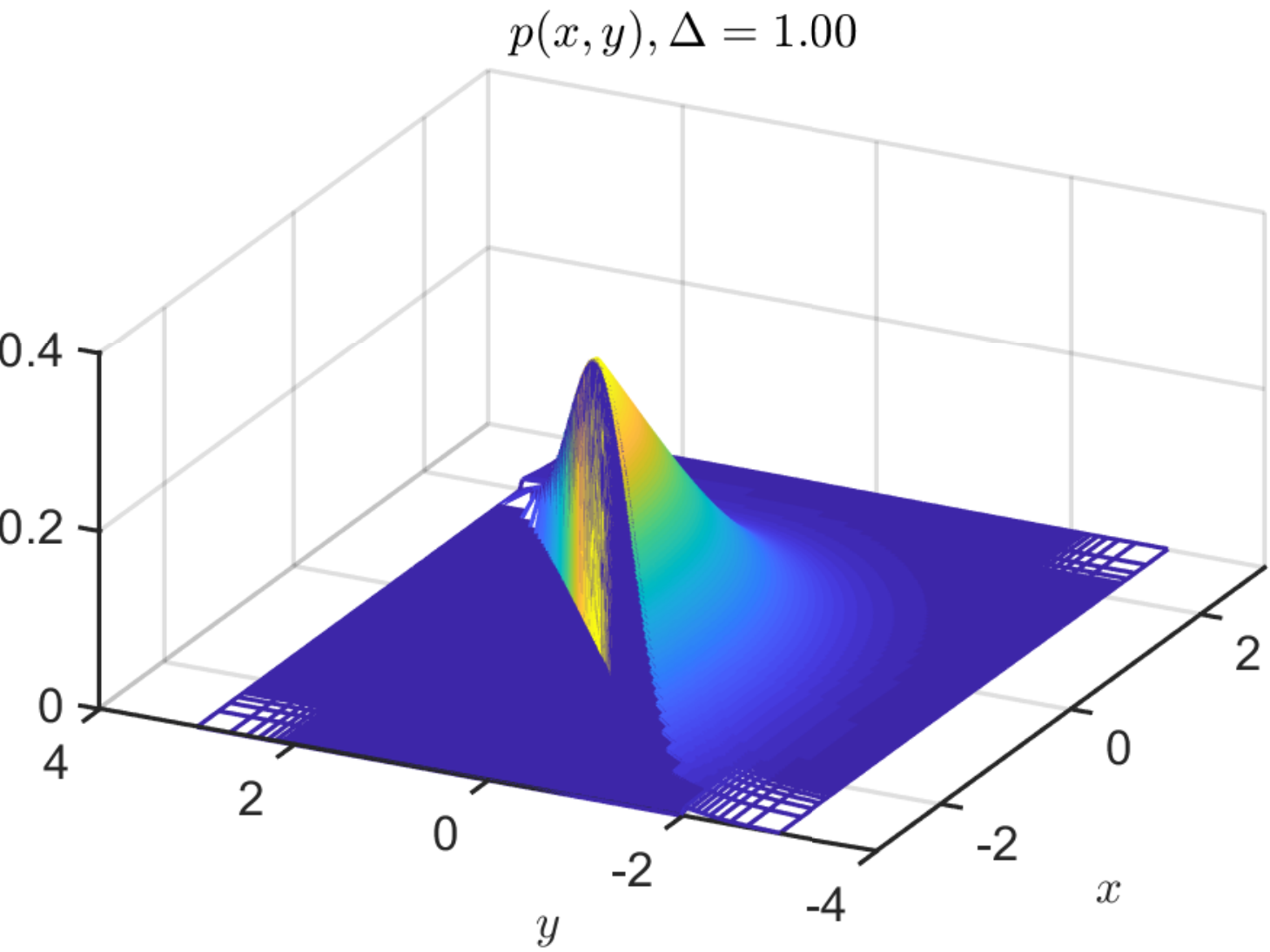}&
	\includegraphics[width=0.5\textwidth]{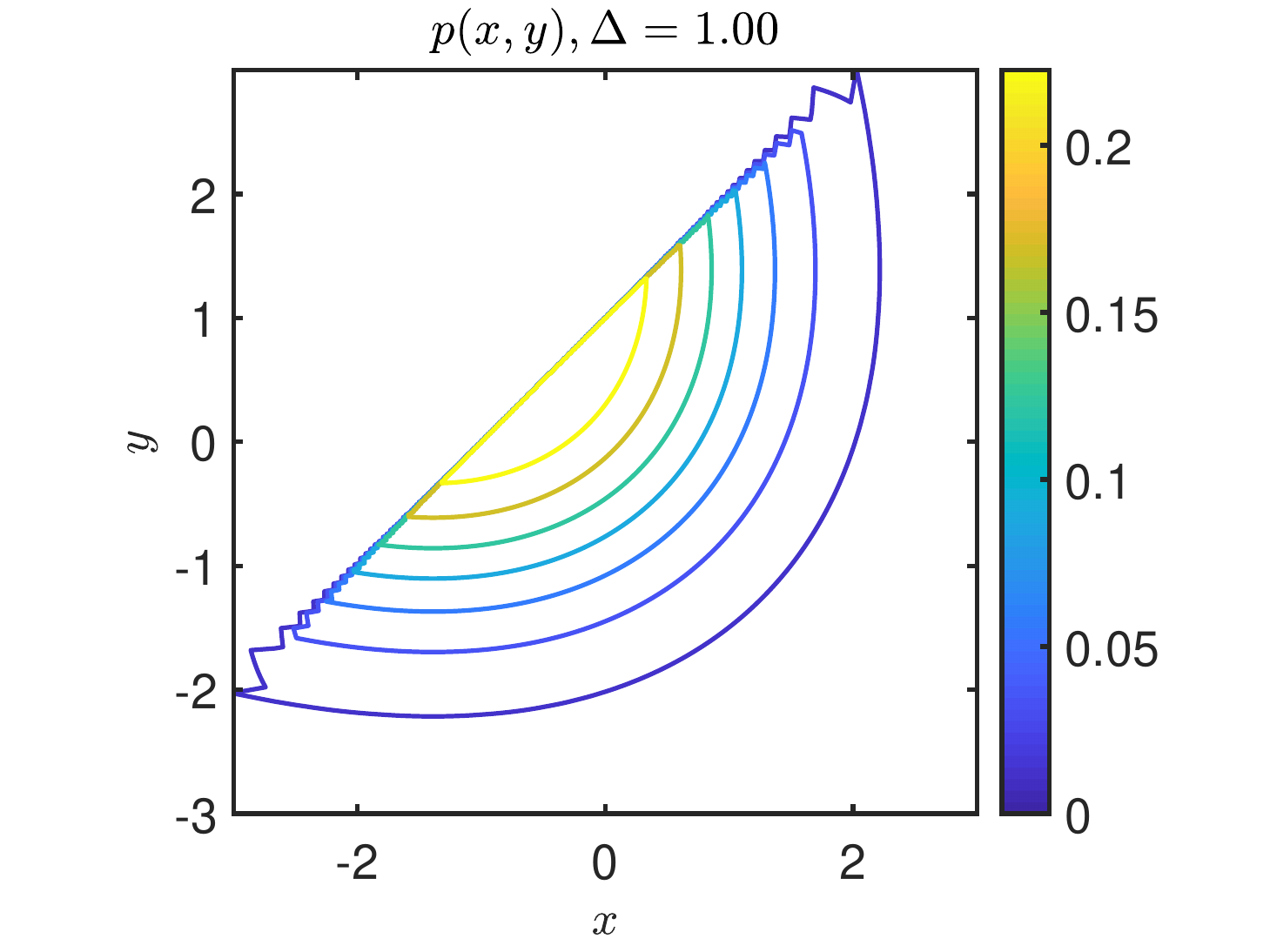}
	\end{array}$
	\caption{Probability density function $p(x,y)$ for Example \ref{exm:Gaussian}, $\Delta=1$. The wiggles in the level curves at the upper right and lower left corners of right plot are numerical artifacts.}
	\end{figure}
\end{example}
\begin{proof}[Proof of Proposition \ref{pro:linearConstraintStandardNormal}]
If (\ref{eqn:linearConstraintStandardNormal}) is satisfied then 
$\Phi((y-\Delta)/a)=P\{aX+\Delta\leq y\}\leq P\{Y\leq y\}=\Phi(y)$ 
for all $y\in\RR$, which is possible only if $a=1$ and $\Delta\geq 0$. For $\Delta\geq 0$ we can take $X=Y$, which gives a singular distribution supported on $x=y$. If $\Delta>0$, Example \ref{exm:Gaussian} shows that $X,Y$ with absolutely continuous joint distribution exists. 
\end{proof}
\section{Symmetries and copulas}\label{sec:bivariateSymmetry} 
Several notions of bivariate symmetries are considered in \cite{Nelsen1993}. A pair of random variables$(X,Y)$ are said to be \emph{exchangeable} if $(X,Y)$ and $(Y,X)$ are equal in distribution, and $(X,Y)$ is exchangeable if and only if its copula $C(u,v)$ is a symmetric function, i.e., $C(u,v)=C(v,u)$. Moreover, $(X,Y)$ is said to be \emph{radially symmetric} about $(a,b)\in\RR^2$ if $(X-a,Y-b)$ and $(a-X,b-Y)$ are equal in distribution, or equivalently,
\begin{equation}\label{eqn:radialSymmetry}
	F_{X,Y}(a+x,b+y)=1-F_X(a-x)-F_Y(b-y)+F_{X,Y}(a-x,b-y)
\end{equation}
Also, $(X,Y)$ is said to be \emph{marginally symmetric} about $(a,b)\in\RR^2$ if 
\begin{equation}\label{eqn:marginalSymmetry}
	F_X(a+x)=1-F_X(a-x)\text{ and }F_Y(b+y)=1-F_Y(b-y).
\end{equation}
The following theorem is proved in \cite[Theorem 3.2]{Nelsen1993}:
\begin{theorem}\label{thm:radialSymmetry}
	Suppose $(X,Y)$ is marginally symmetric about $(a,b)$ with copula $C$. Then $(X,Y)$ is radially symmetric about $(a,b)$ if and only if $C$ satisfies the functional equation
	\begin{equation}\label{eqn:radialSymmetricC}
		C(u,v)=C(1-u,1-v)+u+v-1
	\end{equation}
\end{theorem}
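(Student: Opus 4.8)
The plan is to translate the radial-symmetry identity (\ref{eqn:radialSymmetry}) into the copula identity (\ref{eqn:radialSymmetricC}) by the substitution $u=F_X(a+x)$, $v=F_Y(b+y)$ together with Sklar's representation (\ref{eqn:Copula}). First I would record the two consequences of marginal symmetry (\ref{eqn:marginalSymmetry}) that drive the computation: since $u=F_X(a+x)=1-F_X(a-x)$ we have $F_X(a-x)=1-u$, and likewise $F_Y(b-y)=1-v$. In particular, for each $u=F_X(a+x)\in\operatorname{Range}(F_X)$ we have $1-u=F_X(a-x)\in\operatorname{Range}(F_X)$, so $\operatorname{Range}(F_X)$ is invariant under $u\mapsto 1-u$, and similarly for $F_Y$; this invariance will matter when I discuss the domain on which the identities are valid.

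With these substitutions, Sklar's theorem gives $F_{X,Y}(a+x,b+y)=C(u,v)$ and $F_{X,Y}(a-x,b-y)=C(F_X(a-x),F_Y(b-y))=C(1-u,1-v)$. Substituting into (\ref{eqn:radialSymmetry}) and replacing $F_X(a-x)$ by $1-u$ and $F_Y(b-y)$ by $1-v$, the right-hand side collapses to $1-(1-u)-(1-v)+C(1-u,1-v)=u+v-1+C(1-u,1-v)$, so (\ref{eqn:radialSymmetry}) is \emph{equivalent} to
\[
	C(u,v)=C(1-u,1-v)+u+v-1 .
\]
As $x,y$ range over $\RR$, the pair $(u,v)$ ranges over $\operatorname{Range}(F_X)\times\operatorname{Range}(F_Y)$, so this single computation already proves both implications on that product set: each step (renaming, marginal symmetry, Sklar) is an exact equivalence, so radial symmetry yields the identity there and, run backwards, the identity there yields radial symmetry. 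No separate argument for the two directions is needed beyond this.

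The main obstacle is purely one of \emph{domain}: equation (\ref{eqn:radialSymmetricC}) is asserted on all of $[0,1]^2$, whereas a copula of $(X,Y)$ is only pinned down on $\operatorname{Range}(F_X)\times\operatorname{Range}(F_Y)$. For the direction ``identity $\Rightarrow$ radial symmetry'' this causes no trouble, since I only ever evaluate $C$ at points $(u,v)$ and $(1-u,1-v)$ lying in that product set, and the invariance of the ranges under $u\mapsto 1-u$ noted above is exactly what guarantees $(1-u,1-v)$ is again an admissible argument. For the converse I must promote the identity from the range to the whole square. When $F_X,F_Y$ are continuous (the case relevant to the Gaussian application), $\operatorname{Range}(F_X)$ and $\operatorname{Range}(F_Y)$ are dense in $[0,1]$ and the Lipschitz continuity of any copula extends the identity to all of $[0,1]^2$. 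In the general, possibly discrete, case one instead uses non-uniqueness of the extension: the operator $\sigma(C)(u,v):=C(1-u,1-v)+u+v-1$ is itself a copula (the survival copula), so $\tfrac12\bigl(C+\sigma(C)\bigr)$ is a copula that satisfies (\ref{eqn:radialSymmetricC}) everywhere and agrees with $C$ on the range; thus a copula of $(X,Y)$ satisfying the identity can always be selected. This is bookkeeping about the extension rather than any analytic difficulty, and is the only delicate point in the argument.
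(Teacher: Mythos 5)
Your proof is correct and follows the same route the paper takes for this family of statements: the paper does not prove Theorem \ref{thm:radialSymmetry} itself (it is quoted from Nelsen), but its proof of the analogous Theorem \ref{thm:oppositeRadialSymmetry} is exactly your substitution of the marginal-symmetry relations into (\ref{eqn:radialSymmetry}) via Sklar's representation. The only difference is that the paper sidesteps the domain issue by assuming $F_X,F_Y$ continuous, so that the ranges fill $[0,1]$ and the identity holds on all of $[0,1]^2$ at once, whereas your extension to possibly discontinuous margins via the survival-copula average $\tfrac12(C+\sigma(C))$ is a correct additional refinement beyond what the paper records.
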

There is a corresponding class of bivariate random variables associated to opposite symmetric copulas, which we propose to call \emph{opposite radially symmetric} variables, in accordance with the terminology in \cite{DeBaetsDeMeyerUbeda-Flores2009}, and analogous to the radially symmetric variables of \cite{Nelsen1993}.
\begin{definition}\label{def:oppositeRadialSymmetry}
	The bivariate random variable $(X,Y)$ is said to be \emph{opposite radially symmetric} about $(a,b)\in\RR^2$ if $(a+X,b+Y)$ and $(b-Y,a-X)$ are equal in distribution, or, equivalently,
	\begin{equation}\label{eqn:oppositeRadialSymmetry}
	 	F_{X,Y}(a+x,b+y)=1-F_X(a-y)-F_Y(b-x)+F_{X,Y}(a-y,b-x).
	 \end{equation} 
\end{definition}
\noindent We need to replace marginal symmetry with the following analog of (\ref{eqn:radialSymmetry}):
\begin{definition}\label{def:oppositeMarginalSymmetry}
	The bivariate random variable $(X,Y)$ is said to be \emph{opposite marginally symmetric} about $(a,b)\in\RR^2$ if $F_X,F_Y$ satisfy
	\begin{equation}\label{eqn:oppositeMarginalSymmetry}
		F_X(a+x)=1-F_Y(b-x)\text{ and }F_Y(b+y)=1-F_X(a-y)
	\end{equation}
	for all $x,y$.
\end{definition}
\begin{remark}\label{exm:oppositeMarginalSymmetry}
	If $X,Y$ are identically distributed and marginally symmetric about $(a,a)\in\RR^2$, then $(X,Y)$ is opposite marginally symmetric about $(a,a)$. There are no identically distributed opposite marginally symmetric  $(X,Y)$ about $(a,b)$ if $b\neq a$, since then the common CDF $F_X=F_Y=F$ would satisfy $F(x)=F(x+b-a)$ for all $x$.
\end{remark}
\noindent We have the following analog of Theorem \ref{thm:radialSymmetry}:
\begin{theorem}\label{thm:oppositeRadialSymmetry}
	Suppose that $(X,Y)$ is opposite marginally symmetric about $(a,b)\in\RR^2$ with copula $C$, and suppose that $F_X$, $F_Y$ are  continuous. Then $(X,Y)$ is opposite radially symmetric about $(a,b)$ if and only if $C$ is opposite symmetric.
\end{theorem}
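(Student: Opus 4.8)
The plan is to follow the template of the proof of Theorem \ref{thm:radialSymmetry} from \cite{Nelsen1993}, reducing the statement about joint distribution functions to the functional equation for $C$ by a single change of variables through Sklar's theorem (\ref{eqn:Copula}). Since $F_X$ and $F_Y$ are continuous, $C$ is the unique copula with $F_{X,Y}(x,y)=C(F_X(x),F_Y(y))$, defined on all of $[0,1]^2$, so it suffices to translate opposite radial symmetry (\ref{eqn:oppositeRadialSymmetry}) into a statement about $C$.

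First I would record the two marginal identities that follow from opposite marginal symmetry (\ref{eqn:oppositeMarginalSymmetry}): setting $v=F_Y(b+y)$ gives $F_X(a-y)=1-v$, and setting $u=F_X(a+x)$ gives $F_Y(b-x)=1-u$. With $u=F_X(a+x)$ and $v=F_Y(b+y)$, Sklar's theorem rewrites the left-hand side of (\ref{eqn:oppositeRadialSymmetry}) as $C(u,v)$, and the composite term on the right as $F_{X,Y}(a-y,b-x)=C(F_X(a-y),F_Y(b-x))=C(1-v,1-u)$. Substituting the two marginal identities into (\ref{eqn:oppositeRadialSymmetry}) then collapses its right-hand side to $u+v-1+C(1-v,1-u)$, so that (\ref{eqn:oppositeRadialSymmetry}) becomes exactly the opposite symmetry relation (\ref{eqn:oppositeSymmetryC}). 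Running this computation in reverse yields the converse, so the two conditions coincide for every pair $(x,y)$.

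The one point that needs care, and the main obstacle, is that the substitution $(x,y)\mapsto(u,v)=(F_X(a+x),F_Y(b+y))$ sweeps out only $\mathrm{Range}(F_X)\times\mathrm{Range}(F_Y)$, not necessarily all of $[0,1]^2$; thus the computation above only establishes (\ref{eqn:oppositeSymmetryC}) on that product of ranges. To finish the ``only if'' direction I would invoke continuity: a continuous CDF has range a subinterval of $[0,1]$ whose closure is all of $[0,1]$ (by the intermediate value theorem together with the limits $0$ and $1$ at $\mp\infty$), so $\mathrm{Range}(F_X)\times\mathrm{Range}(F_Y)$ is dense in $[0,1]^2$, while both sides of (\ref{eqn:oppositeSymmetryC}) are continuous in $(u,v)$ since every copula is Lipschitz. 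Hence the identity, valid on a dense set, extends to all of $[0,1]^2$. For the ``if'' direction no such extension is needed, as one only ever evaluates $C$ at points that are genuine values of the marginals.

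I would also remark that the equivalence between the distributional statement that $(a+X,b+Y)$ and $(b-Y,a-X)$ are equal in distribution and the CDF identity (\ref{eqn:oppositeRadialSymmetry}) is a direct consequence of the definition of a joint CDF, computing $P\{b-Y\le x,\,a-X\le y\}$ in terms of $F_X$, $F_Y$ and $F_{X,Y}$, and may be taken as already built into Definition \ref{def:oppositeRadialSymmetry}; the actual content of the theorem is the translation into the copula equation carried out above.
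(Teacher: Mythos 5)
Your proposal is correct and follows essentially the same route as the paper: both use Sklar's theorem together with the opposite marginal symmetry identities to rewrite (\ref{eqn:oppositeRadialSymmetry}) as the functional equation (\ref{eqn:oppositeSymmetryC}) evaluated at $(F_X(a+x),F_Y(b+y))$. Your extra care in extending the identity from $\mathrm{Range}(F_X)\times\mathrm{Range}(F_Y)$ to all of $[0,1]^2$ via density and the Lipschitz continuity of copulas is in fact slightly more rigorous than the paper's one-line appeal to the range being $[0,1]$.
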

\begin{proof}
	It follows from equations (\ref{eqn:oppositeRadialSymmetry}) and (\ref{eqn:oppositeMarginalSymmetry}) that $(X,Y)$ is opposite radially symmetric if and only if 
	\begin{multline}
		C(1-F_Y(b-x),1-F_X(a-y))=
		C(F_X(a+x),F_Y(b+y))\\
		=1-F_X(a-y)-F_Y(b-x)+C(F_X(a-y),F_Y(b-x)).
	\end{multline}
	Since the range of $F_X$ and $F_Y$ is $[0,1]$ this proves the theorem.
\end{proof}
\begin{remark}
	There is an erroneous statement in \cite[Remark 1]{DeBaetsDeMeyerUbeda-Flores2009} that if $C$ is opposite symmetric, then $(X,Y)$ and $(1-Y,1-X)$ are equal in distribution, i.e., $(X,Y)$ is opposite radially symmetric about $(1/2,1/2)$, but additional assumptions like opposite marginal symmetry in Theorem \ref{thm:oppositeRadialSymmetry} is needed to draw that conclusion.
\end{remark}

 \section{Differential equations for copulas with opposite symmetry}\label{sec:DifferentialEquations}
 The following theorem provides a characterization of absolutely continuous copulas with opposite symmetry, and constitutes  the basis for deriving the differential equations. We also obtain a simple formula for Kendall's $\tau$ rank correlation coefficient for opposite symmetric copulas. Kendall's $\tau$ is defined as $\tau_C=-1+4\int_0^1\int_0^1 C(u,v)dC(u,v)$, cf \cite[chapter 5]{Nelsen1999}. 
\begin{theorem}\label{thm:copulaCsymmetry}
	Assume that $p$ is an integrable function on $[0,1]^2$satisfying
	\begin{equation}\label{eqn:oppositeSymmetryP}
		p(u,v)=p(1-v,1-u)
	\end{equation}
	and let
	\begin{equation}\label{eqn:defC}
 		C(u,v)=\int_0^u\int_0^v p(w,z)dwdz.
 	\end{equation}
	Then 
	\begin{equation}\label{eqn:oppositeSymmetryGeneralC}
		C(u,v)=C(1-v,1-u)+C(u,1)+C(1,v)-C(1,1)
	\end{equation}
	and the following two conditions are equivalent:
	\begin{description}
		\item[1.] $C'_u(u,1)=1$ for all $u\in[0,1]$.
		\item[2.] $C'_v(1,v)=1$ for all $v\in[0,1]$.
		\end{description}
		Furthermore, if $p\geq 0$ these conditions are equivalent to
		\begin{description}
			\item[3.] $C$ is an absolutely continuous opposite symmetric copula.
		\end{description}
		and then if also
		\begin{equation}\label{eqn:IntCuCvFinite}
			\int_0^1\int_0^1 C'_u C'_vdudv<\infty,
		\end{equation}
		Kendall's $\tau$ is given by
		\begin{equation}\label{eqn:oppositeKendallTau}
			\tau_C=-1+8\int_0^1 C(u,1-u)du 
		\end{equation}
\end{theorem}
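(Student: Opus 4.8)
The plan is to derive all four assertions from the single symmetry hypothesis (\ref{eqn:oppositeSymmetryP}) on $p$ together with elementary manipulations of the defining double integral (\ref{eqn:defC}). For the functional equation (\ref{eqn:oppositeSymmetryGeneralC}) I would first compute $C(1-v,1-u)=\int_0^{1-v}\!\int_0^{1-u}p(w,z)\,dz\,dw$, insert the symmetry (\ref{eqn:oppositeSymmetryP}) in the form $p(w,z)=p(1-z,1-w)$, and apply the measure-preserving reflection $(w,z)\mapsto(1-z,1-w)$; this turns the integral of $p$ over $[0,1-v]\times[0,1-u]$ into the integral of $p$ over $[u,1]\times[v,1]$. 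Two-dimensional inclusion--exclusion on the unit square then rewrites the latter as $C(1,1)-C(u,1)-C(1,v)+C(u,v)$, which is exactly (\ref{eqn:oppositeSymmetryGeneralC}). Note that once the uniform-margin identities $C(u,1)=u$, $C(1,v)=v$, $C(1,1)=1$ are in hand, (\ref{eqn:oppositeSymmetryGeneralC}) reduces precisely to the opposite symmetry (\ref{eqn:oppositeSymmetryC}).

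For the equivalences, set $g(u)=\int_0^1 p(u,z)\,dz$ and $h(v)=\int_0^1 p(w,v)\,dw$, so that $C'_u(u,1)=g(u)$ and $C'_v(1,v)=h(v)$ almost everywhere by the fundamental theorem of calculus; the substitution $z\mapsto 1-z$ in (\ref{eqn:oppositeSymmetryP}) gives $g(u)=h(1-u)$, whence $g\equiv 1$ if and only if $h\equiv 1$, which is the equivalence of condition~1 and condition~2. Assuming in addition $p\ge 0$, I would verify the copula axioms directly: groundedness $C(u,0)=C(0,v)=0$ is immediate, the uniform margins $C(u,1)=\int_0^u g=u$ and $C(1,v)=\int_0^v h=v$ follow from condition~1 (equivalently condition~2), and the rectangle inequality is simply $\int_{[u_1,u_2]\times[v_1,v_2]}p\ge 0$; absolute continuity follows from $C''_{uv}=p$ a.e.\ and $\int_0^1\!\int_0^1 p=C(1,1)=1$, while opposite symmetry is (\ref{eqn:oppositeSymmetryGeneralC}) specialized to unit margins. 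The converse is immediate, since any copula satisfies $C(u,1)=u$ and hence $C'_u(u,1)=1$. This closes the chain condition~1 $\Leftrightarrow$ condition~2 $\Leftrightarrow$ condition~3.

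For Kendall's $\tau$ I would proceed in two steps. The first is the reduction $\tau_C=1-4\int_0^1\!\int_0^1 C'_uC'_v\,du\,dv$, obtained by writing $\tau_C=-1+4\int_0^1\!\int_0^1 C\,C''_{uv}\,du\,dv$ and integrating by parts in $u$, using $C(0,v)=0$, $C(1,v)=v$ and $C'_v(1,v)=1$; the hypothesis (\ref{eqn:IntCuCvFinite}) is exactly what makes every integral finite and legitimizes the integration by parts and Fubini, despite the density being unbounded near the corners $(0,0)$ and $(1,1)$. The second step is to collapse this square integral onto the anti-diagonal by means of opposite symmetry, the natural tool being the Fubini identity $\int_0^1 C(u,1-u)\,du=\int_{u+v\le 1}(1-u-v)\,p\,du\,dv$ together with the reflection $(u,v)\mapsto(1-v,1-u)$, which interchanges the triangles $\{u+v<1\}$ and $\{u+v>1\}$ and, through (\ref{eqn:oppositeSymmetryP}) and (\ref{eqn:oppositeSymmetryGeneralC}), links their contributions. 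I expect this second step to be the main obstacle: performing the symmetry reduction so that the two-dimensional integral collapses to the anti-diagonal section with exactly the constant appearing in (\ref{eqn:oppositeKendallTau}), while rigorously controlling the boundary contributions near $(0,0)$ and $(1,1)$, is the delicate part of the argument, and (\ref{eqn:IntCuCvFinite}) is precisely the hypothesis that tames it.
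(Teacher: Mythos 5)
Most of your proposal tracks the paper's own argument closely. The functional equation (\ref{eqn:oppositeSymmetryGeneralC}) via the reflection $(w,z)\mapsto(1-z,1-w)$ combined with inclusion--exclusion is exactly the paper's computation; your route to the equivalence of conditions~1 and~2 through the marginal densities $g(u)=\int_0^1p(u,z)\,dz=h(1-u)$ is a harmless variation (the paper instead puts $u=0$ into (\ref{eqn:oppositeSymmetryGeneralC}) after deducing $C(u,1)=u$); and your verification of the copula axioms, of absolute continuity, and of the converse direction is the paper's argument made slightly more explicit. Up to and including condition~3 the proposal is correct and essentially the same proof.

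The genuine gap is in the Kendall's $\tau$ part, and it sits exactly where you flagged the ``main obstacle'': the collapse onto the anti-diagonal cannot be completed. Splitting $\int_0^1\int_0^1C'_uC'_v\,dv\,du$ over the two triangles and applying the reflection --- which via differentiation of (\ref{eqn:oppositeSymmetryC}) gives $C'_u(1-v,1-u)=1-C'_v(u,v)$ and $C'_v(1-v,1-u)=1-C'_u(u,v)$ --- turns the upper-triangle integral into $\int_{u+v<1}(1-C'_u)(1-C'_v)$. But $C'_uC'_v+(1-C'_u)(1-C'_v)=1-C'_u-C'_v+2C'_uC'_v$, so the total is $\frac{1}{2}-2\int_0^1C(u,1-u)\,du+2\int_{u+v<1}C'_uC'_v\,du\,dv$: the cross term does not cancel and the integral does not reduce to the opposite diagonal section alone. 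Indeed the asserted formula (\ref{eqn:oppositeKendallTau}) is false as stated: the independence copula $C=uv$ is absolutely continuous, opposite symmetric and satisfies (\ref{eqn:IntCuCvFinite}), has $\tau_C=0$, yet $-1+8\int_0^1u(1-u)\,du=1/3$. The paper's own proof makes precisely this algebraic slip (replacing $C'_uC'_v+(1-C'_v)(1-C'_u)$ by $1-C'_u-C'_v$), so no completion of your second step can succeed; the identity that the symmetry actually yields is $\tau_C=-1+8\int_0^1C(u,1-u)\,du-8\int_{u+v<1}C'_uC'_v\,du\,dv$. Your first step (deriving $\tau_C=1-4\int_0^1\int_0^1C'_uC'_v\,du\,dv$ by parts under (\ref{eqn:IntCuCvFinite})) is fine and is simply cited from Nelsen in the paper.
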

\begin{proof}[Proof of Theorem \ref{thm:copulaCsymmetry}]
	By the inclusion-exclusion principle for integrals we have 
	\begin{equation}\label{eqn:pIntegral1}
		\int_u^1\int_v^1p(w,z)dzdw=C(u,v)+C(1,1)-C(u,1)-C(1,v).
	\end{equation}
	By change of variables and symmetry (\ref{eqn:oppositeSymmetryP}) we also have
	\begin{multline}\label{eqn:pIntegral2}
		\int_u^1\int_v^1p(w,z)dzdw=\int_0^{1-v}\int_0^{1-u}p(1-z,1-w)dzdw\\
		=\int_0^{1-v}\int_0^{1-u}p(w,z)dzdw = C(1-v,1-u).
	\end{multline}
	which proves (\ref{eqn:oppositeSymmetryGeneralC}). Assume that $C'_u(u,1)=1$ for $u\in[0,1]$, it follows that $C(u,1)=u$ for $u\in[0,1]$. Then (\ref{eqn:oppositeSymmetryGeneralC}) with $u=0$ simplifies to $0=C(1,v)-v$, so $C'_v(1,v)=1$. Similarly, $C'_v(1,v)\equiv1\implies C'_u(u,1)\equiv 1$. If these conditions hold, $C(u,1)\equiv u$ and $C(1,v)\equiv v$, which shows that $C$ is a copula, which is absolutely continuous by equation (\ref{eqn:defC}), and equation (\ref{eqn:oppositeSymmetryGeneralC}) implies equation (\ref{eqn:oppositeSymmetryC}), i.e., opposite symmetry. Conversely, assuming $C$ an absolute continuous copula satisfying (\ref{eqn:oppositeSymmetryC}), differentiation yields $C'_u(u,1)\equiv 1$ and $C'_v(1,v)\equiv 1$. Suppose in addition that (\ref{eqn:IntCuCvFinite}) holds true. Differentiation of (\ref{eqn:oppositeSymmetryC}) yields
	\begin{equation}
		C'_u(1-v,1-u)=1-C'_v(u,v),\quad C'_v(1-v,1-u)=1-C'_u(u,v)
	\end{equation}
	which gives
	\begin{multline}
		\int_0^1\int_0^1 C'_uC'_vdvdu = \int_0^1\int_0^{1-u}C'_uC'_vdvdu + \int_0^1\int_{1-u}^1 C'_uC'_v dv du\\
		=\int_0^1\int_0^{1-u}C'_uC'_v+(1-C'_v)(1-C'_u)dvdu\\
		=\int_0^1\int_0^{1-u}1-C'_u-C'_v dvdu =\frac12 - 2\int_0^1 C(u,1-u)du
	\end{multline}
	According to \cite[equation (5.1.10)]{Nelsen1999}, equation (\ref{eqn:IntCuCvFinite}) implies that $\tau_C = 1-4\int_0^1\int_0^1C'_uC'_vdudv$, which proves (\ref{eqn:oppositeKendallTau}).
\end{proof}

We will now show that copulas satisfying the assumptions in Theorem \ref{thm:oppositeRadialSymmetry}, with the additional assumption of being \emph{conditionally independent on $u+v\leq 1$}  can be characterized by differential equations. This method is reminiscent of the well known method of \emph{separation of variables} for construction of solutions to partial differential equations. 
 This will also give a construction method for absolutely continuous copulas with given opposite diagonal section, a problem considered in \cite{DeBaetsDeMeyerUbeda-Flores2009}, cf. Theorem \ref{thm:separable3} below.  Later, we will modify the construction, restricting the copula density support to $v\leq H(u)$, which is required to solve Problem \ref{prb:standardNormalRestricted}.
\begin{theorem}\label{thm:separable}
	Assume that
	\begin{equation}\label{eqn:separableP}
		p(u,v)=\left\{
			\begin{matrix}
				F'(u)G'(v)&\text{ if }&u+v\leq 1\\
				F'(1-v)G'(1-u)&\text{ if }&u+v>1
			\end{matrix}
		\right.
	\end{equation}
	where $F(0)=G(0)=0$, $G'\geq 0$ and $C$ is given by (\ref{eqn:defC}). Then 
	\begin{equation}\label{eqn:separableCu}
		C'_u(u,v)=\left\{
			\begin{matrix}
				F'(u)G(v)&\text{ if }&u+v\leq 1\\
				G(1-u)F'(u)+G'(1-u)(F(u)-F(1-v))&\text{ if }&u+v>1
			\end{matrix}
		\right.
	\end{equation}
	and the following are equivalent:
	\begin{enumerate}
		\item $F'\geq0$ and 
		\begin{equation}\label{eqn:separableODE}
			G(1-u)F'(u)+G'(1-u)F(u)=1, u\in[0,1]
		\end{equation}
		\item $C(u,v)$ is an absolutely continuous copula,
	\end{enumerate} 
	and then
	\begin{equation}\label{eqn:separableC}
		C(u,v)=\left\{
			\begin{matrix}
				F(u)G(v)&\text{ if }&u+v\leq 1\\
				F(1-v)G(1-u)+u+v-1&\text{ if }&u+v>1
			\end{matrix}
		\right.
	\end{equation}
\end{theorem}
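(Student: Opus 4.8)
The plan is to reduce everything to Theorem \ref{thm:copulaCsymmetry}. First I would observe that the density (\ref{eqn:separableP}) satisfies the opposite symmetry (\ref{eqn:oppositeSymmetryP}): if $u+v\le 1$ then $(1-v)+(1-u)\ge 1$, so $p(1-v,1-u)=F'(1-(1-u))G'(1-(1-v))=F'(u)G'(v)=p(u,v)$, and the case $u+v>1$ is the same computation read in reverse. Hence $C$ defined by (\ref{eqn:defC}) meets the hypotheses of Theorem \ref{thm:copulaCsymmetry}, and in particular the relation (\ref{eqn:oppositeSymmetryGeneralC}) is available for free.

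Next I would establish the formula (\ref{eqn:separableCu}) for $C'_u$. Differentiating (\ref{eqn:defC}) in the outer variable gives $C'_u(u,v)=\int_0^v p(u,z)\,dz$. When $u+v\le 1$ every $z\in[0,v]$ satisfies $u+z\le 1$, so $p(u,z)=F'(u)G'(z)$ and the integral is $F'(u)G(v)$. When $u+v>1$ I would split the integral at $z=1-u$: on $[0,1-u]$ the integrand is $F'(u)G'(z)$, contributing $F'(u)G(1-u)$, while on $[1-u,v]$ it is $F'(1-z)G'(1-u)$, and the substitution $s=1-z$ turns $\int_{1-u}^{v}F'(1-z)\,dz$ into $\int_{1-v}^{u}F'(s)\,ds=F(u)-F(1-v)$. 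Adding the pieces yields the second line of (\ref{eqn:separableCu}). This case-split, together with the sign-flipping substitution, is the one genuinely computational step.

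For the equivalence I would specialize (\ref{eqn:separableCu}) to $v=1$. Since $u+1>1$ for $u>0$ and $F(0)=0$, the second line gives $C'_u(u,1)=G(1-u)F'(u)+G'(1-u)F(u)$, so the differential equation (\ref{eqn:separableODE}) is literally the statement $C'_u(u,1)\equiv 1$, which is condition~1 of Theorem \ref{thm:copulaCsymmetry}. Moreover, with $G'\ge 0$ already assumed, the hypothesis $F'\ge 0$ is equivalent to $p\ge 0$: one direction is immediate from (\ref{eqn:separableP}), and for the other I would use that a copula has $C''_{uv}=p\ge 0$ a.e., together with the nondegeneracy of $G$ (if $G\equiv 0$ then $C\equiv 0$ is not a copula, so $G(v)>0$ for $v$ beyond some threshold, which forces $F'\ge 0$ through $C'_u=F'(u)G(v)\ge 0$ below the diagonal). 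Thus condition~(1) is exactly ``$p\ge 0$ and condition~1 of Theorem \ref{thm:copulaCsymmetry}'', which by that theorem is equivalent to $C$ being an absolutely continuous opposite symmetric copula, i.e. condition~(2). I expect this reverse implication $p\ge 0\Rightarrow F'\ge 0$, where the possible vanishing of $G$ near $0$ must be controlled, to be the main obstacle, rather than the routine integral computations.

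Finally I would read off the closed form (\ref{eqn:separableC}). For $u+v\le 1$ the whole rectangle $[0,u]\times[0,v]$ lies in $\{w+z\le 1\}$, so (\ref{eqn:defC}) factors as $C(u,v)=F(u)G(v)$. For $u+v>1$ I would invoke (\ref{eqn:oppositeSymmetryGeneralC}): here $(1-v)+(1-u)<1$, so $C(1-v,1-u)=F(1-v)G(1-u)$ by the case just computed, and since $C$ is now known to be a copula we have $C(u,1)=u$, $C(1,v)=v$, $C(1,1)=1$; substituting gives $C(u,v)=F(1-v)G(1-u)+u+v-1$, which is the second line of (\ref{eqn:separableC}).
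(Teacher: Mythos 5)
Your proof is correct and follows essentially the same route as the paper's: integrate the piecewise density to obtain $C'_u$, identify (\ref{eqn:separableODE}) with the condition $C'_u(u,1)\equiv1$ of Theorem \ref{thm:copulaCsymmetry}, and get the $u+v>1$ branch of (\ref{eqn:separableC}) from the symmetry relation (\ref{eqn:oppositeSymmetryGeneralC}). Your extra care on the implication $p\ge 0\Rightarrow F'\ge 0$ (arguing via $C'_u=F'(u)G(v)\ge 0$ below the opposite diagonal so as to handle possible vanishing of $G'$) is a small but legitimate refinement of a step the paper passes over without comment.
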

\begin{proof}
 Integration $C'_u(u,v)=\int_0^v C''_{uv}(u,z)dz$ of the piecewise defined function $p=C''_{uv}$ yields $C'_u(u,v)=F'(u)G(v)$ for $u+v\leq 1$ and $C'_u(u,v)=G(1-u)F'(u)+G'(1-u)(F(u)-F(1-v))$ for $u+v>1$, so $C'_u(u,1)=G(1-u)F'(u)+G'(1-u)F(u)$. Suppose that $F'\geq 0$ and (\ref{eqn:separableODE}) holds true. Then $p\geq 0$ and $C'_u(u,1)\equiv 1$ so $C$ is an absolutely continuous copula by Theorem \ref{thm:copulaCsymmetry}. Conversely, suppose that $C$ is an absolutely continuous copula. Then $C''_{uv}=p\geq 0$ so $F'\geq 0$ by (\ref{eqn:separableP}), and (\ref{eqn:separableODE}) holds since $C'_u(u,1)\equiv 1$. Moreover, integration $C(u,v)=\int_0^u C'_u(z,v)dz$ yields (\ref{eqn:separableC}) for $u+v\leq 1$, and (\ref{eqn:separableC}) for $u+v> 1$ follows from Theorem \ref{thm:copulaCsymmetry}.
\end{proof}
The differential equation (\ref{eqn:separableODE}) can be solved with the integrating factor method. Moreover, a condition for $F'(u)\geq 0$ can be derived.
\begin{theorem}\label{thm:separable2}
	Assume that $G$ satisfies the assumptions of Theorem \ref{thm:separable}.
	Then $F(u)$ satisfy (\ref{eqn:separableODE}) and $F(0)=0$ if and only if
	\begin{equation}\label{eqn:separableF}
		F(u)=G(1-u)\int_0^u\frac{dz}{G(1-z)^2}
	\end{equation}
	Moreover, if $F(u)$ is given by (\ref{eqn:separableF}), then
	\begin{equation}\label{eqn:separableFprime}
		F'(u)=G'(1-u)\left(\frac{L(0)}{G(1)^2}+\int_0^u \frac{1+L'(z)}{G(1-z)^2}dz\right)
	\end{equation}
	where
	\begin{equation}\label{eqn:L}
		L(u)=\frac{G(1-u)}{G'(1-u)}.
	\end{equation}
	Finally, if $u^*\in[0,1]$, $L'(u)\geq -1$ for $u\in(u^*,1)$ and if
	\begin{equation}\label{eqn:separablePosCond}
		-\int_0^{u^*} \frac{1+L'(z)}{G(1-z)^2}dz\leq \frac{L(0)}{G(1)^2}
	\end{equation}
	then $F'(u)\geq0$ for $u\in(0,1)$. 
\end{theorem}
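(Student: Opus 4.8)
The plan is to dispatch the three assertions in turn: the first two are exact calculations (solving a linear ODE and differentiating the resulting formula), while the third is a sign analysis that contains the real content.

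For the equivalence in the first assertion I would solve the linear first--order equation (\ref{eqn:separableODE}) by recognizing its left--hand side as an exact derivative. Since $\frac{d}{du}G(1-u)=-G'(1-u)$, the quotient rule gives
\[
G(1-u)F'(u)+G'(1-u)F(u)=G(1-u)^2\,\frac{d}{du}\!\left(\frac{F(u)}{G(1-u)}\right),
\]
so (\ref{eqn:separableODE}) is equivalent to $\frac{d}{du}\bigl(F(u)/G(1-u)\bigr)=1/G(1-u)^2$. Integrating from $0$ to $u$ and using $F(0)=0$ (which makes the lower boundary term vanish) yields (\ref{eqn:separableF}); conversely, substituting (\ref{eqn:separableF}) gives $F(0)=G(1)\cdot 0=0$ and differentiating recovers the ODE. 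Every step is reversible, so this establishes the stated ``if and only if''.

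For the formula (\ref{eqn:separableFprime}) I would differentiate (\ref{eqn:separableF}) directly by the product rule, obtaining $F'(u)=G(1-u)^{-1}-G'(1-u)\int_0^u G(1-z)^{-2}\,dz$. Factoring out $G'(1-u)$ and rewriting $1/\bigl(G(1-u)G'(1-u)\bigr)=L(u)/G(1-u)^2$ through (\ref{eqn:L}), the bracket becomes $L(u)/G(1-u)^2-\int_0^u G(1-z)^{-2}\,dz$. The computational key is the identity
\[
\frac{d}{dz}\!\left(\frac{L(z)}{G(1-z)^2}\right)=\frac{2+L'(z)}{G(1-z)^2},
\]
which drops out of the quotient rule once one uses $L(z)G'(1-z)=G(1-z)$ to simplify the cross term. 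Integrating this from $0$ to $u$ expresses $L(u)/G(1-u)^2$ as $L(0)/G(1)^2+\int_0^u(2+L'(z))G(1-z)^{-2}\,dz$; subtracting the remaining $\int_0^u G(1-z)^{-2}\,dz$ collapses $2+L'(z)-1$ to $1+L'(z)$, which is exactly (\ref{eqn:separableFprime}).

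The sign assertion is where the main obstacle lies. Writing $F'(u)=G'(1-u)\,\Psi(u)$ with $\Psi(u)$ the bracketed factor of (\ref{eqn:separableFprime}), and noting that $G'\ge 0$ (from the assumptions of Theorem \ref{thm:separable}) forces $G'(1-u)\ge 0$, it suffices to prove $\Psi(u)\ge 0$ on $(0,1)$. The hypothesis $L'\ge -1$ on $(u^*,1)$ makes the integrand $(1+L'(z))/G(1-z)^2$ nonnegative there, so $\Psi$ is nondecreasing on $[u^*,1)$; since condition (\ref{eqn:separablePosCond}) is precisely the inequality $\Psi(u^*)\ge 0$, we obtain $\Psi\ge 0$, hence $F'\ge 0$, throughout $[u^*,1)$. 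The delicate part is the complementary interval $(0,u^*)$, where the sign of $1+L'$ is uncontrolled: from $\Psi(u)=\Psi(u^*)-\int_u^{u^*}(1+L'(z))G(1-z)^{-2}\,dz$ one sees that positivity there requires $u^*$ to be a minimizer of $\Psi$, i.e. that $\Psi$ does not dip below $\Psi(u^*)$ before reaching $u^*$ (equivalently $L'\le -1$ on $(0,u^*)$, making the subtracted integral nonpositive). I would therefore take $u^*$ to mark the transition from $L'\le -1$ to $L'\ge -1$, so that $\Psi$ decreases then increases with global minimum $\Psi(u^*)\ge 0$; this is the step I expect to need the most careful justification, since the bare hypothesis as stated only pins down the behaviour on $(u^*,1)$.
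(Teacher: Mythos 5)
Your proposal is correct and follows essentially the same route as the paper: integrating factor $1/G(1-u)^2$ for the ODE, then the identity $\frac{d}{du}\bigl(L(u)/G(1-u)^2\bigr)=(2+L'(u))/G(1-u)^2$ to pass from $L(u)/G(1-u)^2-\int_0^u G(1-z)^{-2}dz$ to (\ref{eqn:separableFprime}), then a monotonicity argument for the sign. The concern you raise about the interval $(0,u^*)$ is well founded: the paper's proof simply asserts that ``by the assumptions'' the map $u\mapsto -\int_0^u(1+L'(z))/G(1-z)^2\,dz$ attains its maximum at $u^*$, which requires the unstated hypothesis $L'(u)\leq -1$ on $(0,u^*)$ exactly as you identify (without it, condition (\ref{eqn:separablePosCond}) controls the sign only on $[u^*,1)$); note that the parallel result, Theorem \ref{thm:prescribed2}, does state the corresponding two-sided hypothesis explicitly, confirming that this is the intended reading here as well.
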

\begin{proof}
	Equation (\ref{eqn:separableF}) is obtained by multiplying (\ref{eqn:separableODE}) with the integrating factor $1/G(1-u)^2$. Equation (\ref{eqn:separableODE}) yields
	\begin{equation}\label{eqn:separableODE2}
		F'(u)=\frac1{G(1-u)}-\frac1{L(u)}F(u)
	\end{equation}
	 and substituting (\ref{eqn:separableF}) in (\ref{eqn:separableODE2}) using (\ref{eqn:L}) yields 
	 \begin{equation}\label{eqn:separableFprime2}
		F'(u)=G'(1-u)\left(\frac{L(u)}{G(1-u)^2}-\int_0^u \frac{dz}{G(1-z)^2}\right)
	\end{equation}
	and the identity
	\begin{equation}\label{eqn:LbyGsquared}
		\od{}{u}\left(\frac{L(u)}{G(1-u)^2}\right)=\frac{2+L'(u)}{G(1-u)^2}
	\end{equation}
	yields 
	\begin{equation}
		\frac{L(u)}{G(1-u)^2}=\frac{L(0)}{G(1)^2}+\int_0^u\frac{2+L'(z)}{G(1-z)^2}dz
	\end{equation}
	 which proves (\ref{eqn:separableFprime}). Moreover, by the assumptions, $u\mapsto -\int_0^u (1+L'(z))/G(1-z)^2 dz$ has its maximum for $u=u^*$, so it follows from (\ref{eqn:separablePosCond}) that $F'(u)\geq F'(u^*)\geq 0$ for $u\in[0,1]$.
\end{proof}
\begin{example}
	$G(v)=v$, $L(u)=1-u$, $1+L'(u)=0$, $F'(u)=G'(1-u)/G(1)$, yields the independence copula $C(u,v)=uv$.
\end{example}
\begin{example}\label{exm:separable1}
	If $k\geq 1$ and $G(v)=v^k$, then (\ref{eqn:separableODE}) has solution
	\begin{equation}
		F(u)=\frac{(1-u)^{1-k}-(1-u)^k}{2k-1}
	\end{equation}
	and $F'(u)\geq 0$ for $u\in[0,1]$, so 
	\begin{equation}
		C(u,v)=\left\{
			\begin{matrix}
				((1-u)^{1-k}-(1-u)^k)v^k/(2k-1)&\text{ if }&u+v\leq 1\\
				(1-u)^k(v^{1-k}-v^k)/(2k-1)+u+v-1&\text{ if }&u+v>1
			\end{matrix}
			\right.
		\end{equation}
	is a one--parameter family of absolutely continuous copulas. In particular, for $k=1$ we obtain the independence copula $uv$. For $k>1$, $\lim_{u\nearrow1}F(u)=\infty$.
\end{example}
\begin{example}\label{exm:separable2}
	If $G(v)=\sin(\pi v/2)$, then (\ref{eqn:separableODE}) has solution
	\begin{equation}
		F(u)=2\sin(\pi u/2)/\pi
	\end{equation}
	and $F'(u)\geq 0$ for $u\in[0,1]$, so 
	\begin{equation}
		C(u,v)=\left\{
			\begin{matrix}
				2\sin(\pi u/2)\sin(\pi v/2)/\pi&\text{ if }&u+v\leq 1\\
				2\cos(\pi u/2)\cos(\pi v/2)/\pi+u+v-1&\text{ if }&u+v>1
			\end{matrix}
			\right.
		\end{equation}
	is an absolutely continuous copula.
\end{example}
Since the positivity conditions in Theorem \ref{thm:separable2} is formulated in terms of the function $L$, it is natural to start by specifying $L$ satisfying (\ref{eqn:separablePosCond}). This is also related to the problem of constructing copulas with prescribed \emph{opposite diagonal section} $\omega(u)=C(u,1-u)$ considered in \cite{DeBaetsDeMeyerUbeda-Flores2009}. In fact, given $\omega$, the function $L$ is given by the explicit formula (\ref{eqn:separableLfromOmega}) below. This is formulated in Theorem \ref{thm:separable3}.
\begin{theorem}\label{thm:separable3}
	Suppose that $L$ is a positive real--valued function defined on $[0,1]$ such that
	\begin{equation}\label{eqn:separable3condL1}
		\int_0^u \frac{dz}{L(z)}<\infty
	\end{equation}
	for $u\in[0,1)$ and
	\begin{equation}\label{eqn:separable3condL2}
		\lim_{u\to 1-}\int_0^u \frac{dz}{L(z)}=\infty.
	\end{equation}
	Let
	\begin{equation}\label{eqn:separableGfromL}
		G(v)=\exp\left(-\int_0^{1-v}\frac{dz}{L(z)}\right)
	\end{equation}
	and suppose that (\ref{eqn:separablePosCond}) holds true. Moreover, let $F(u)$ be given by (\ref{eqn:separableF}). Then $C$ given by (\ref{eqn:separableC}) is an absolutely continuous copula. Moreover, the opposite diagonal section
	\begin{equation}\label{eqn:separableOppositeDiagonal}
		\omega(u)\equiv C(u,1-u)
	\end{equation}
	satisfies
	\begin{equation}\label{eqn:separableLfromOmega}
		L(u)=\frac{2\omega(u)}{1-\omega'(u)}.
	\end{equation}
\end{theorem}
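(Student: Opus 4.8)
The plan is to reduce the statement to the previously established Theorems \ref{thm:separable} and \ref{thm:separable2}, both of which are phrased in terms of a function $G$ satisfying $G(0)=0$ and $G'\ge 0$. So the first job is to check that the $G$ built from $L$ via (\ref{eqn:separableGfromL}) meets these requirements and is compatible with the defining relation (\ref{eqn:L}); after that the copula statement is immediate, and the formula (\ref{eqn:separableLfromOmega}) is a short direct computation.

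First I would record the elementary properties of $G$. Writing $\Psi(v)=\int_0^{1-v}dz/L(z)$, so that $G(v)=e^{-\Psi(v)}$, condition (\ref{eqn:separable3condL1}) makes $\Psi$ finite on $(0,1]$, whence $G>0$ there, while (\ref{eqn:separable3condL2}) gives $\Psi(0)=+\infty$ and hence $G(0)=0$; also $G(1)=1$. Since $\Psi'(v)=-1/L(1-v)$, differentiation gives $G'(v)=G(v)/L(1-v)\ge 0$, as required by Theorem \ref{thm:separable}. Evaluating at $1-u$ yields $G'(1-u)=G(1-u)/L(u)$, i.e. $G(1-u)/G'(1-u)=L(u)$, so the function $L$ appearing in (\ref{eqn:L}) is exactly the prescribed $L$. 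This consistency is the conceptual heart of the argument: it simply expresses that (\ref{eqn:separableGfromL}) is the integrating-factor solution of $L=G(1-\cdot)/G'(1-\cdot)$.

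With $G$ admissible, I would invoke Theorem \ref{thm:separable2}: the $F$ given by (\ref{eqn:separableF}) automatically satisfies $F(0)=0$ and the ODE (\ref{eqn:separableODE}), and the hypothesis (\ref{eqn:separablePosCond}) (together with the accompanying sign condition on $L'$) yields $F'\ge 0$ on $(0,1)$. Thus condition (1) of Theorem \ref{thm:separable} holds, and that theorem delivers both that $C$ defined by (\ref{eqn:separableC}) is an absolutely continuous copula and the closed form (\ref{eqn:separableC}) itself.

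It remains to derive (\ref{eqn:separableLfromOmega}). I would set $v=1-u$ in the first branch of (\ref{eqn:separableC}), obtaining the opposite diagonal section (\ref{eqn:separableOppositeDiagonal}) as $\omega(u)=F(u)G(1-u)$, and then differentiate to get $\omega'(u)=F'(u)G(1-u)-F(u)G'(1-u)$. Substituting $G(1-u)F'(u)=1-G'(1-u)F(u)$ from the ODE (\ref{eqn:separableODE}) collapses this to $\omega'(u)=1-2F(u)G'(1-u)$, so that $1-\omega'(u)=2F(u)G'(1-u)$, and dividing gives
\[
	\frac{2\omega(u)}{1-\omega'(u)}=\frac{2F(u)G(1-u)}{2F(u)G'(1-u)}=\frac{G(1-u)}{G'(1-u)}=L(u)
\]
by (\ref{eqn:L}), which is (\ref{eqn:separableLfromOmega}). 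All the computations are routine once the earlier theorems are in hand; the only point that genuinely requires care is the consistency between the prescribed $L$ and the $L$ induced by $G$ through (\ref{eqn:L}), i.e. verifying that feeding (\ref{eqn:separableGfromL}) back into the separation-of-variables machinery reproduces the same $L$.
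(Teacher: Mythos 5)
Your proof is correct. The first half (checking $G(0)=0$, $G>0$, $G'\ge 0$, the compatibility $L(u)=G(1-u)/G'(1-u)$ with (\ref{eqn:L}), and then invoking Theorems \ref{thm:separable2} and \ref{thm:separable}) is exactly the paper's route. Where you genuinely diverge is in deriving (\ref{eqn:separableLfromOmega}): the paper differentiates $\omega=FG(1-\cdot)$ to obtain the pair $F'(u)G(1-u)=(1+\omega'(u))/2$ and $F(u)G'(1-u)=(1-\omega'(u))/2$, then eliminates $F$ by differentiating once more, uses the identity (\ref{eqn:Lprime}) to produce the first-order relation $(1-\omega'(u))L'(u)-\omega''(u)L(u)=2\omega'(u)$, integrates it to $(1-\omega'(u))L(u)=2\omega(u)+\mathrm{const}$, and fixes the constant via $\omega(1)=0$ and $L(1)=0$. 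Your argument short-circuits all of this: from $\omega'=1-2FG'(1-\cdot)$ and $\omega=FG(1-\cdot)$ you just take the quotient, which is both shorter and avoids the extra differentiability of $\omega$ and the boundary argument the paper needs. The only price is the division by $F(u)G'(1-u)$, which forces $u\in(0,1)$ (at $u=0$ one has $F(0)=0$ and $\omega'(0)=1$, so (\ref{eqn:separableLfromOmega}) is a $0/0$ expression there in either treatment); the paper's integrated identity $(1-\omega')L=2\omega$ at least remains a true, if vacuous, statement at the endpoints. This is a minor caveat shared with the paper's own formula, not a gap in your argument.
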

\begin{proof}
	Clearly, because $L$ is positive and satisfies (\ref{eqn:separable3condL1}) and (\ref{eqn:separable3condL2}), $G$ defined by (\ref{eqn:separableGfromL}) is positive, $G$ is increasing (in fact strictly increasing) and $G(0)=0$. Moreover, it follows from (\ref{eqn:separableGfromL}) that (\ref{eqn:L}) holds true. By Theorem \ref{thm:separable2}, $F'(u)\geq 0$ and by Theorem \ref{thm:separable}, $C$ is an absolutely continuous copula. Differentiation of $F(u)G(1-u)=\omega(u)$ yields $F'(u)G(1-u)-F(u)G'(1-u)=\omega'(u)$, so in view of (\ref{eqn:separableODE}) we get 
	\begin{equation}\label{eqn:separable3FprimeG}
		F'(u)G(1-u)=\frac{1+\omega'(u)}2
	\end{equation}
	and
	\begin{equation}\label{eqn:separable3FGprime}
		F(u)G'(1-u)=\frac{1-\omega'(u)}2
	\end{equation}
	Solving for $F(u)$ in (\ref{eqn:separable3FGprime}), differentiating and substituting $F'(u)$ in the left hand side of (\ref{eqn:separable3FprimeG}) yields
	\begin{equation}
		(1-\omega'(u))\frac{G''(1-u)G(1-u)}{G'(1-u)^2}-\omega''(u)\frac{G(1-u)}{G'(1-u)}=1+\omega'(u).
	\end{equation}
	Using (\ref{eqn:L}) and the identity
	\begin{equation}\label{eqn:Lprime}
		\frac{G''(1-u)G(1-u)}{G'(1-u)^2}=1+L'(u)
	\end{equation}
	we get
	\begin{equation}
		(1-\omega'(u))L'(u)-\omega''(u)L(u)=2\omega'(u)
	\end{equation}
	which is integrated to $(1-\omega'(u))L(u)=2\omega(u)+$constant. Since $\omega(1)=C(1,0)=0$ and $L(1)=0$ in view of (\ref{eqn:separable3condL2}), the integration constant is zero, which proves (\ref{eqn:separableLfromOmega}).
\end{proof}

\begin{example}
 	Assume that $k\geq 1$ and let $L(u)=(1-u)/k$. Then we get $G(1-u)=(1-u)^k$ so we recover Example \ref{exm:separable1}. Also, $L'(u)=-1/k\geq -1$ so $u^*=0$ and and since $0\leq L(0)=1/k$ we infer from Theorem \ref{thm:separable2} that an absolutely continuous copula is obtained.  
 \end{example} 
 \begin{example}
 	Assume that $a\in[0,1)$ and let $L(u)=(1-u)(1-au)$. Then
 	\begin{equation*}
 		G(1-u)=\left(\frac{1-u}{1-au}\right)^{1/(1-a)}
 	\end{equation*}
 	and $u^*=1/2$: $L'(u)=-1+a(-1+2u)\leq -1$ if $u\leq 1/2$, $L'(u)\geq -1$ if $u\geq 1/2$. We obtain 
 	\begin{multline*}
 		\int_0^{u^*}\frac{1+L'(z)}{G(1-z)^2}dz = \int_0^{1/2}\left(\frac{1-au}{1-u}\right)^{2/(1-a)}a(1-2u)du\\
 		=\frac12F_1\left(1,\frac2{1-a},-\frac2{1-a};3;\frac12,\frac{a}2\right)
 	\end{multline*}
 	Here $F_1$ is the Appell series (see \cite[p. 1027]{GradshteynRyzhik2014} for a definition), which may be represented by Picard's integral formula, cf. \cite{CuytDriverTanVerdonk1999}:
 	\begin{multline*}
 		F_1(a,b,b';c;x,y)\\=\frac{\Gamma(c)}{\Gamma(a)\Gamma(c-a)}\int_0^1t^{a-1}(1-t)^{c-a-1}(1-tx)^{-b}(1-ty)^{-b'}dt
 	\end{multline*}
 	Here, $\Gamma$ denotes Euler's gamma function (\cite[p. 901]{GradshteynRyzhik2014}). The function $F_1$ is available in computer algebra systems like Maple\textsuperscript{\textregistered} and Mathematica\textsuperscript{\textregistered}, and numerical investigation reveals that the right hand side is an increasing function of $a$ and approaches the value $0.861485$ as $a\to 1-$. Therefore condition (\ref{eqn:separablePosCond}) is satisfied, so Theorem \ref{thm:separable2} yields an absolutely continuous copula, and (\ref{eqn:separableF}) can be evaluated to
 	\begin{equation*}
 	 	F(u)=u G(1-u)F_1\left(1,\frac2{1-a},-\frac2{1-a};2;au,u\right).
 	 \end{equation*}
 	 When $2/(1-a)$ is integer, this expression can be simplified to a finite sum of powers and logarithms, cf. \cite{CuytDriverTanVerdonk1999}. 
 \end{example}
  
\section{Comparison with other methods}\label{sec:Comparison}
A method by Durante and Jaworski is found in \cite{DuranteJaworski2008}, where absolutely continuous copulas $C(u,v)$ with given \emph{diagonal section} $C(t,t)$ are constructed, in terms of convex combinations of singular \emph{diagonal copulas}
 \begin{equation}
 	C_{\delta}(u,v)=\min\left(u,v,\frac{\delta(u)+\delta(v)}2\right)
 \end{equation}
(satisfying $C_{\delta}(t,t)=\delta(t)$). The problem with this approach for our purposes is that the constraint $v\leq H(u)$ imposes functional inequalities $\delta(H(u))+\delta(u)\leq 2u$ that must be fullfilled for the $\delta$'s used in the construction. In comparison, the advantage of our differential equation method is that $H$ is used explicitly, using only elementary calculus. 

Regarding copulas and differential equations, there is a characterization of {\emph all} copulas by Jaworski, in terms of a certain type of weak solutions to differential equations in \cite{Jaworski2014}. For comparison we give here a simplified account of his method in the special case of absolutely continuous copulas with differentiable density and sectional inverse. For fixed $u\in[0,1]$ let $C(u,\cdot)^{-1}(z)$ denote the assumed unique solution $v$ to the equation $C(u,v)=z$, i.e., $C(u,C(u,\cdot)^{-1}(z))=z$ for all $z\in[0,1]$, and define
\begin{equation}
	C_{[u]}(t,z)=u^{-1} C(ut,C(u,\cdot)^{-1}(uz))
\end{equation}
Moreover, define
\begin{equation}
	F_C(u,z)=\left.\pd{}{t}C_{[u]}(t,z)\right|_{t=1}-z=C'_u(u,C(u,\cdot)^{-1}(uz))-z
\end{equation}
Now suppose that for each $v\in[0,1]$, $g_v(u)$ is solution to the terminal value problem
\begin{eqnarray}
	ug'_v(u)&=&F_C(u,g_v(u)),u\in(0,1)\label{eqn:JaworskiODE}\\
	g_v(1)&=&v\label{eqn:JaworskiTC}
\end{eqnarray}
Then $C$ can be characterized in terms of $g_v(u)$ as
\begin{equation}\label{eqn:JaworskiCharacterization}
	C(u,v)=u g_u(v)
\end{equation}
To see this, note that by the definition of $F_C$ and the product rule of differentiation, (\ref{eqn:JaworskiODE}) is equivalent to
\begin{equation}
	\od{}{u}(ug_v(u))=C'_u(u,C(u,\cdot)^{-1}(ug_v(u)))
\end{equation}
and this ODE for $g_v(u)$ is satisfied for $g_v(u)=C(u,v)/u$, so by uniqueness of solution to (\ref{eqn:JaworskiODE})-(\ref{eqn:JaworskiTC}), (\ref{eqn:JaworskiCharacterization}) must hold. The general result (valid for all copulas) can be found in \cite[Theorems 3.1 and 3.2]{Jaworski2014}. Now, applying Jaworski's characterization theorem to a copula of the form (\ref{eqn:separableC}), we need to compute $C(u,\cdot)^{-1}(z)$ to obtain $F_C$. For $z\leq 1-u$ we get $F(u)G(v)=z$, which can be solved explicitly, yielding $v=C(u,\cdot)^{-1}(z)=G^{-1}(z/F(u))$. However, for $z>1-u$, $v=C(u,\cdot)^{-1}(z)$ is implicitly defined by $F(1-v)G(1-u)+u+v-1=z$, which can not be solved for $v$ in terms of $F,G$ and their inverses. Therefore, we have not been able to use Jaworski's method to obtain equations for $F,G$ for copulas of the type (\ref{eqn:separableC}). 

\section{Absolutely continuous copulas with prescribed support}\label{sec:prescribed}
Here we construct absolutely continuous opposite symmetric copulas with the support of the probability measure prescribed by a constraint $v\leq H(v)$. The construction is simple, using elementary calculus and a piecewise definition of the copula density, similar to Theorem \ref{thm:separable}
 \begin{theorem}\label{thm:prescribed}
 	Suppose that $0<u_0<1/2$ and that $H$ is a strictly increasing function defined on $[0,1]$, continuously differentiable on $(0,u_0)$, satisfying $H(u_0)=1-u_0$ and satisfying the symmetry condition
 	\begin{equation}\label{eqn:symmetricH}
 		H(u)+H^{-1}(1-u)=1.
 	\end{equation}
 	Furthermore, suppose that $F$ is a differentiable function defined on $[u_0,1)$ such that $F(u_0)=0$, $G$ is a differentiable function defined on $[0,1-u_0]$ such that $G(0)=0$, $G'\geq 0$ and $C(u,v)$ given by (\ref{eqn:defC}), where
 	\begin{equation}\label{eqn:prescribedP}
 		p(u,v)=\left\{
 		\begin{matrix}
 			G'(v)/G(H(u))&\text{ if }&0<u\leq u_0, &0<v\leq H(u)\\
 			0&\text{ if }&0<u\leq u_0, &H(u)<v\leq 1-u\\
 			F'(u)G'(v)&\text{ if }&u_0<u<1, &0<v\leq 1-u\\
 			p(1-v,1-u)&\text{ if }& 0<u<1, &1-u<v<1
 		\end{matrix}
 		\right.
 	\end{equation}
 	Furthermore, let
 	\begin{equation}\label{eqn:K}
 		K(u)=\int_{u_0}^u \frac{H'(z)dz}{G(1-z)}
 	\end{equation}
 	for $u_0\leq u\leq1$. Then the following are equivalent:
 	\begin{enumerate}
 		\item $F'\geq 0$ and
 		\begin{equation}\label{eqn:prescribedODE}
 			F'(u)G(1-u)+G'(1-u)(F(u)+K(u))=1
 		\end{equation}
 		for $u\in[u_0,1)$.
 		\item $C(u,v)$ is an absolutely continuous copula, and then
 		\begin{enumerate}
 			\item If $0\leq u\leq u_0$ and $0\leq v\leq H(u)$, then
 			\begin{equation}\label{eqn:prescribedC1}
 				C(u,v)=H^{-1}(v)+(K(1-v)-K(H^{-1}(1-u)))G(v)
 			\end{equation}
 			\item If $0\leq u\leq u_0$ and $H(u)\leq v\leq 1-u$ then 
 			\begin{equation}\label{eqn:prescribedC2}
 				C(u,v)=u
 			\end{equation}
 			\item If $u_0\leq u\leq 1$ and $0\leq v\leq 1-u$ then
 			\begin{equation}\label{eqn:prescribedC3}
 				C(u,v)=H^{-1}(v)+(K(1-v)+F(u))G(v)
 			\end{equation}
 			\item If $0\leq u\leq 1$ and $u+v>1$ then $C(u,v)$ is given by (\ref{eqn:oppositeSymmetryC}).
 		\end{enumerate}
 	\end{enumerate}
 \end{theorem}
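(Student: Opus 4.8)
The plan is to reduce everything to Theorem~\ref{thm:copulaCsymmetry}. First I would observe that the density $p$ of (\ref{eqn:prescribedP}) satisfies the opposite symmetry (\ref{eqn:oppositeSymmetryP}) \emph{by construction}, since its fourth branch defines $p$ on $\{u+v>1\}$ as the reflection $p(1-v,1-u)$ of its values on $\{u+v\le 1\}$; and that $p\ge 0$ holds if and only if $F'\ge 0$, because $G'\ge 0$ and $G>0$ make the first and third branches nonnegative exactly when $F'\ge 0$. Granting these, Theorem~\ref{thm:copulaCsymmetry} says that $C$ is an absolutely continuous opposite symmetric copula precisely when $p\ge 0$ and $C'_u(u,1)=1$ for all $u$ (integrability of $p$, needed to invoke that theorem, follows from the same computation below, since $\iint p=\int_0^1 C'_u(u,1)\,du$). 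Thus the equivalence of conditions~1 and~2 comes down to the single identity: $C'_u(u,1)=1$ for all $u$ if and only if the ODE (\ref{eqn:prescribedODE}) holds.

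The core computation is therefore $C'_u(u,1)=\int_0^1 p(u,z)\,dz$. For $0<u\le u_0$ I would split the integral at $z=H(u)$ and $z=1-u$: the first piece gives $\int_0^{H(u)}G'(z)/G(H(u))\,dz=1$, the middle piece vanishes by the second branch of (\ref{eqn:prescribedP}), and on $z>1-u$ the reflected density lands, via (\ref{eqn:symmetricH}), back in the vanishing second branch; hence $C'_u(u,1)=1$ automatically for $u\le u_0$. For $u_0<u<1$ I would split at $z=1-u$. The part $z\le 1-u$ is the third branch and contributes $F'(u)G(1-u)$. For the reflected part $z>1-u$, the substitution $w=1-z$ rewrites it as $\int_0^u p(w,1-u)\,dw$; splitting this at $w=u_0$ gives $G'(1-u)F(u)$ from $[u_0,u]$ (third branch, using $F(u_0)=0$) and $G'(1-u)\int_{1-H(u)}^{u_0}dw/G(H(w))$ from $[0,u_0]$ (first branch, the lower limit $1-H(u)=H^{-1}(1-u)$ being where $H(w)$ meets $1-u$).

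The crucial maneuver is the change of variables $w=1-H(z)$, under which the identity $H(1-H(z))=1-z$ (equivalent to (\ref{eqn:symmetricH})) turns $dw/G(H(w))$ into $H'(z)\,dz/G(1-z)$, so that $\int_{1-H(u)}^{u_0}dw/G(H(w))=K(u)$ by (\ref{eqn:K}). Collecting terms yields $C'_u(u,1)=F'(u)G(1-u)+G'(1-u)(F(u)+K(u))$, which equals $1$ exactly when (\ref{eqn:prescribedODE}) holds; together with the automatic case $u\le u_0$ this establishes $1\Leftrightarrow 2$ through Theorem~\ref{thm:copulaCsymmetry}. I expect this substitution, combined with keeping the piecewise boundaries (the curve $z=H(w)$ versus the opposite diagonal $z=1-w$) straight, to be the main obstacle; everything else is bookkeeping.

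Finally, for the explicit formulas I would integrate $p$ directly, $C(u,v)=\int_0^u\!\int_0^v p\,dz\,dw$, region by region. In regions (a) and (c) I would split the $w$-integral at $w=H^{-1}(v)$: for $w<H^{-1}(v)$ the inner $z$-integral equals $1$ (contributing the term $H^{-1}(v)$), while for $w>H^{-1}(v)$ it equals $G(v)/G(H(w))$, and the same substitution $w=1-H(z)$ converts the remaining $w$-integral into the differences of $K$ appearing in (\ref{eqn:prescribedC1}) and (\ref{eqn:prescribedC3}), using $K(H^{-1}(1-u))=K(1-H(u))$ from (\ref{eqn:symmetricH}). Region~(b) reduces to $C(u,v)=u$, since there $H^{-1}(v)\ge u$ forces the inner integral to be identically $1$; and region~(d) is just (\ref{eqn:oppositeSymmetryC}), which holds because Theorem~\ref{thm:copulaCsymmetry} gives (\ref{eqn:oppositeSymmetryGeneralC}) and the copula relations $C(u,1)=u$, $C(1,v)=v$ reduce it to opposite symmetry.
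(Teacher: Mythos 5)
Your proposal is correct and follows essentially the same route as the paper: both reduce the equivalence to Theorem~\ref{thm:copulaCsymmetry}, verify $C'_u(u,1)=1$ iff (\ref{eqn:prescribedODE}) by integrating the piecewise density, and use the same substitution $w=1-H(z)$ (the paper's alternate form (\ref{eqn:Kb}) of $K$) to turn the contribution of the reflected first branch into $K(u)$. The only cosmetic difference is that you derive the explicit formulas by integrating $p$ directly over rectangles region by region, whereas the paper first tabulates $C'_u(u,v)$ on all seven regions and then integrates in $u$.
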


 \begin{proof}
 The basic idea of the proof is similar to Theorem \ref{thm:separable}: integrate the given piecewise defined \emph{ansatz} for the copula density $C''_{uv}$ to derive $C'_u$ and use Theorem \ref{thm:copulaCsymmetry}. By definition $p(u,v)=C''_{uv}(u,v)$ and piecewisely defined on the regions 1-7 depicted in Figure \ref{fig:SquarePartsPrescribed} as follows; region 1: $C''_{uv}=G'(v)/G(H(u))$, region 2,3,7: $C''_{uv}=0$, region 4: $C''_{uv}=F'(u)G'(v)$, region 5: $C''_{uv}=F'(1-v)G'(1-u)$, and region 6: $C''_{uv}=G'(1-u)/G(H(1-v))$. Integration yields $C'_u(u,v)=\int_0^vC''_{uv}(u,z)dz$, piecewisely defined as follows; region 1: $C'_u=G(v)/G(H(u))$, region 2,3: $C'_u=1$, region 4: $C'_u=F'(u)G(v)$, region 5: $C'_u=F'(u)G(1-u)+(F(u)-F(1-v))G'(1-u)$, region 6: $C'_u=F'(u)G(1-u)+(F(u)+K(H^{-1}(v)))G'(1-u)${}, and region 7: $C'_u=F'(u)G(1-u)+(F(u)+K(u))G'(1-u)$. To derive the expression in region 6, write $K$ on the alternate form
 \begin{equation}\label{eqn:Kb}
 	K(u)=\int_{H^{-1}(1-u)}^{u_0}\frac{dw}{G(H(w))}
 \end{equation}
 (derived by the change of variables $z=1-H(w)=H^{-1}(1-w)$) and note that 
 \begin{multline*}
 	\int_{1-u_0}^v\frac{dz}{G(H(1-z))}=\int_{1-v}^{u_0}\frac{dw}{G(H(w))}\\
 	=\int_{H^{-1}(1-H^{-1}(v))}^{u_0}\frac{dw}{G(H(w))}=K(H^{-1}(v))
 \end{multline*}
 in view of (\ref{eqn:symmetricH}). If $F'\geq 0$ and (\ref{eqn:prescribedODE}) holds true, then $p\geq 0$ by (\ref{eqn:prescribedP}) and $C'_u(u,1)\equiv1$ by (\ref{eqn:prescribedODE}) since the left hand side of (\ref{eqn:prescribedODE}) is the expression for $C'_u$ in region 7. Thus, by theorem \ref{thm:copulaCsymmetry}, $C$ is an absolutely continous copula. Conversely, if $C$ is an absolutely continuous copula, then $p=C''_{uv}\geq 0$ so $F'\geq 0$ and by (\ref{eqn:prescribedP}), and $C'_u(u,1)=1$ which proves (\ref{eqn:prescribedODE}). The conditions $C'_u(u,1)\equiv 1$ and $C'_v(1,v)\equiv 1$ are equivalent by Theorem \ref{thm:copulaCsymmetry}. Assume now that $C$ is an absolutely continuous copula, then $C'_u=1$ in region 7 by (\ref{eqn:prescribedODE}). Integration $C(u,v)=\int_0^u C'_u(z,v)dz$ yields the following piecewise defined function $C(u,v)$; region 2,3,7: $C=u$ which proves (\ref{eqn:prescribedC2}), region 1: $C=H^{-1}(v)+(K(1-v)-K(H^{-1}(1-u)))G(v)$ which proves (\ref{eqn:prescribedC1}), and region 4: $C=H^{-1}(v)+(K(1-v)+F(u))G(v)$ which proves (\ref{eqn:prescribedC3}). The final statement for $u+v>1$ follows from Theorem \ref{thm:copulaCsymmetry}. 
 \end{proof}
 \begin{figure}[H]\label{fig:SquarePartsPrescribed}
	\includegraphics[width=0.8\textwidth]{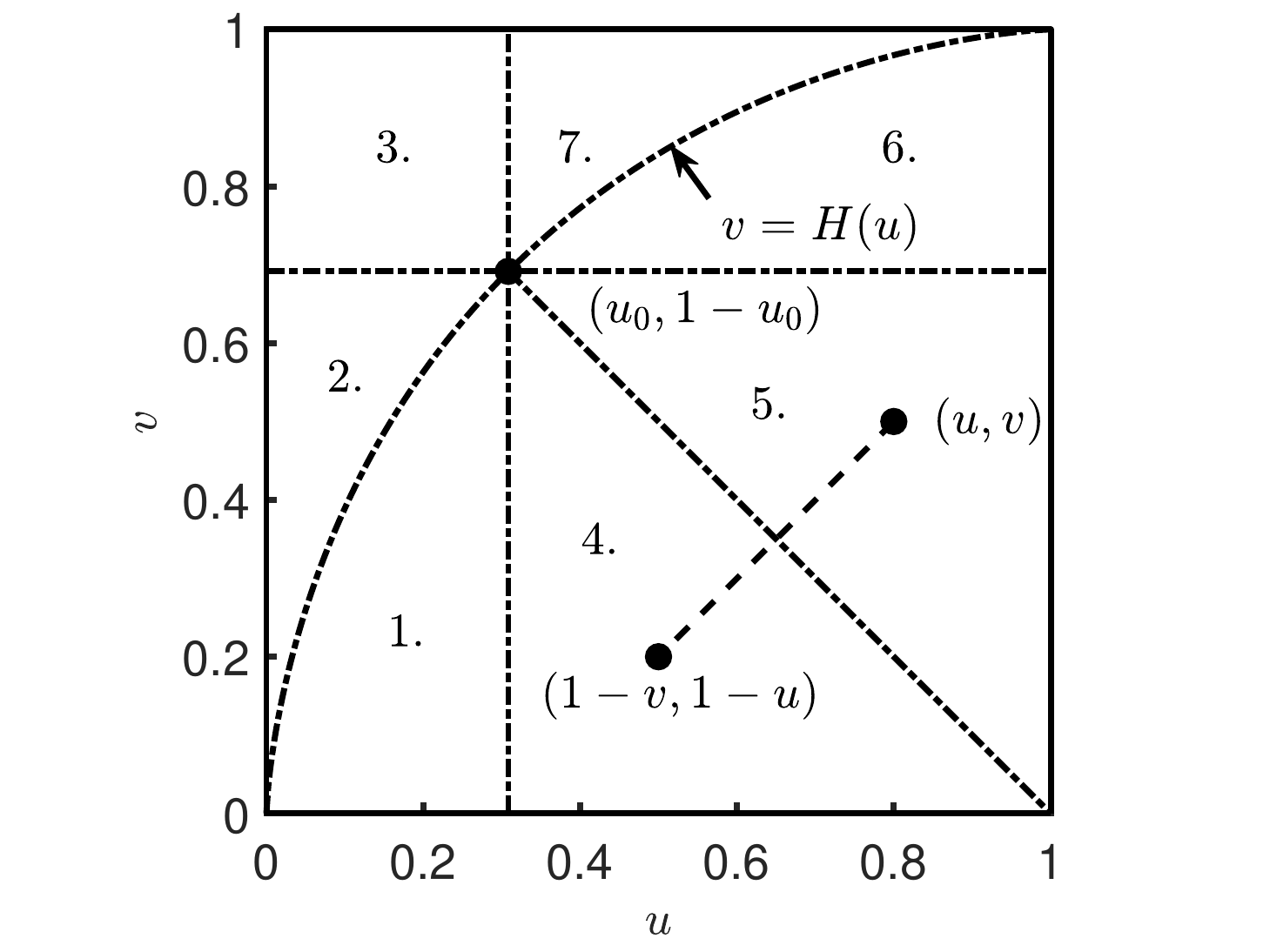}
	\caption{Subdivision of the unit square for piecewise definition of $p=C''_{uv}$ in Theorem \ref{thm:prescribed}.}
\end{figure}
 Equation (\ref{eqn:prescribedODE}) can be solved with the integrating factor method, and a positivity condition can be derived, analogous to Theorem \ref{thm:separable2}:
\begin{theorem}\label{thm:prescribed2}
	Assume that $K(u)$ is given by (\ref{eqn:K}), and $G$ satisfies the assumptions of Theorem \ref{thm:prescribed}. Then $F(u)$ satisfies (\ref{eqn:prescribedODE}) if and only if
	\begin{equation}\label{eqn:prescribedF}
		F(u)=-K(u)+G(1-u)\int_{u_0}^u\frac{1+H'(z)}{G(1-z)^2}dz.
	\end{equation}
	Moreover, if $F(u)$ is given by (\ref{eqn:prescribedF}), then
	\begin{equation}\label{eqn:prescribedFprime}
		F'(u)=G'(1-u)\left(\frac{L(u_0)}{G(1-u_0)^2}+\int_{u_0}^u\frac{1+L'(z)-H'(z)}{G(1-z)^2}dz\right)
	\end{equation}
	where $L$ is given by (\ref{eqn:L}).
	Finally, if $u^*\in[u_0,1]$, $L'(u)-H'(u)\leq -1$ for $u\in(u_0,u^*)$, $L'(u)-H'(u)\geq -1$ for $u\in(u^*,1)$ and
	\begin{equation}\label{eqn:prescribedPosCond}
		-\int_{u_0}^{u^*} \frac{1+L'(z)-H'(z)}{G(1-z)^2}dz\leq \frac{L(u_0)}{G(1-u_0)^2}
	\end{equation}
	then $F'(u)\geq0$ for $u\in(u_0,1)$.
\end{theorem}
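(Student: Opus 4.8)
The plan is to mimic the proof of Theorem \ref{thm:separable2}, treating (\ref{eqn:prescribedODE}) as a linear first--order ODE for $F$ and reusing the identity (\ref{eqn:LbyGsquared}) established there. First I would solve (\ref{eqn:prescribedODE}). In standard form it reads $F'(u)+\frac{G'(1-u)}{G(1-u)}F(u)=\frac{1-G'(1-u)K(u)}{G(1-u)}$, and since $\int\frac{G'(1-u)}{G(1-u)}\,du=-\ln G(1-u)$ the integrating factor is $1/G(1-u)$. Integrating $\od{}{u}(F(u)/G(1-u))$ from $u_0$ to $u$ and using $F(u_0)=0$ gives $F(u)=G(1-u)\int_{u_0}^u\frac{1-G'(1-z)K(z)}{G(1-z)^2}\,dz$. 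To bring this to the stated form (\ref{eqn:prescribedF}) I would integrate the $K$--term by parts, using $\od{}{z}\frac{1}{G(1-z)}=\frac{G'(1-z)}{G(1-z)^2}$, the boundary value $K(u_0)=0$, and $K'(z)=H'(z)/G(1-z)$ from (\ref{eqn:K}); the boundary contribution supplies the term $-K(u)$ and the remaining integral the $H'$ term. The converse direction is uniqueness for the linear initial value problem: any two solutions differ by a multiple of the homogeneous solution $G(1-u)$, and $F(u_0)=0$ together with $G(1-u_0)>0$ fixes the constant. Equivalently, one may substitute (\ref{eqn:prescribedF}) directly into (\ref{eqn:prescribedODE}) and observe that the combination $F(u)+K(u)$ cancels the $-K(u)$, making the verification immediate.

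Next I would differentiate (\ref{eqn:prescribedF}) to obtain (\ref{eqn:prescribedFprime}). Using $\od{}{u}G(1-u)=-G'(1-u)$ and $K'(u)=H'(u)/G(1-u)$, the two contributions containing $H'$ combine to give $F'(u)=\frac{1}{G(1-u)}-G'(1-u)\int_{u_0}^u\frac{1+H'(z)}{G(1-z)^2}\,dz$. I would then factor out $G'(1-u)$, note that $\frac{1}{G(1-u)}=G'(1-u)\frac{L(u)}{G(1-u)^2}$ by the definition (\ref{eqn:L}) of $L$, and rewrite $\frac{L(u)}{G(1-u)^2}$ by means of the identity (\ref{eqn:LbyGsquared}) as $\frac{L(u_0)}{G(1-u_0)^2}+\int_{u_0}^u\frac{2+L'(z)}{G(1-z)^2}\,dz$. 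Combining the two integrals through $(2+L')-(1+H')=1+L'-H'$ yields exactly (\ref{eqn:prescribedFprime}).

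For the positivity claim I would argue as at the end of Theorem \ref{thm:separable2}. Since $G'\geq 0$, it suffices that the bracket in (\ref{eqn:prescribedFprime}) be nonnegative, i.e. that $\Phi(u):=-\int_{u_0}^u\frac{1+L'(z)-H'(z)}{G(1-z)^2}\,dz\leq\frac{L(u_0)}{G(1-u_0)^2}$ for all $u\in(u_0,1)$. The sign hypotheses state that $1+L'-H'\leq 0$ on $(u_0,u^*)$ and $1+L'-H'\geq 0$ on $(u^*,1)$, so $\Phi'\geq 0$ before $u^*$ and $\Phi'\leq 0$ after it; hence $\Phi$ attains its maximum at $u^*$, and this maximum is bounded by $L(u_0)/G(1-u_0)^2$ precisely by hypothesis (\ref{eqn:prescribedPosCond}). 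Thus the bracket is nonnegative throughout and $F'(u)\geq 0$ on $(u_0,1)$.

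I expect the only genuinely delicate step to be the bookkeeping in the first part: reconciling the raw integrating--factor solution with the closed form (\ref{eqn:prescribedF}) via integration by parts on the $K$--term while keeping the signs of the $H'$ contributions straight. The remainder is a faithful transcription of the argument for Theorem \ref{thm:separable2}, the new features being the extra term $K$ (arising from regions 1 and 6 of Figure \ref{fig:SquarePartsPrescribed}) and the appearance of $H'$ in the integrand.
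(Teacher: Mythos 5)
Your argument is correct and follows essentially the same route as the paper: integrating factor to obtain the raw solution, integration by parts on the $K$--term using $K(u_0)=0$ and $K'(z)=H'(z)/G(1-z)$, the identity (\ref{eqn:LbyGsquared}) to reach (\ref{eqn:prescribedFprime}), and the maximum-at-$u^*$ argument for positivity. The only (immaterial) deviation is that you obtain the intermediate formula $F'(u)=1/G(1-u)-G'(1-u)\int_{u_0}^u(1+H'(z))/G(1-z)^2\,dz$ by differentiating (\ref{eqn:prescribedF}) directly, whereas the paper gets the same expression by substituting $K+F$ back into the ODE.
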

\begin{proof}
	Multiplying (\ref{eqn:prescribedODE}) with the integrating factor $1/G(1-u)^2$ and integrating by parts (using $K(u_0)=0$) yields
	\begin{multline*}
		F(u)=G(1-u)\int_{u_0}^u \frac{1-G'(1-z)K(z)}{G(1-z)^2}\\
		=G(1-u)\left(\int_{u_0}^u\frac{dz}{G(1-z)^2}-\frac{K(u)}{G(1-u)}+\int_{u_0}^u\frac{K'(z)}{G(1-z)}\right)
	\end{multline*}
	so substituting 
	\begin{equation}\label{eqn:Kprime}
		K'(z)=\frac{H'(z)}{G(1-z)}
	\end{equation}
	according to (\ref{eqn:K}) yields (\ref{eqn:prescribedF}). Solving for $F'$ in (\ref{eqn:prescribedODE}): 
	\begin{equation}\label{eqn:prescribedODE2}
		F'(u)=\frac1{G(1-u)}-\frac1{L(u)}(K(u)+F(u))
	\end{equation}
	and substituting 
	\begin{equation}\label{eqn:KplusF1}
		K(u)+F(u)=\int_{u_0}^u\frac{1+H'(z)}{G(1-z)^2}
	\end{equation}
	according to (\ref{eqn:prescribedF}) yields 
	\begin{equation}\label{eqn:prescribedFprime2}
		F'(u)=G'(1-u)\left(\frac{L(u)}{G(1-u)^2}-\int_{u_0}^u\frac{1+H'(z)}{G(1-z)^2}dz\right)
	\end{equation}
	The identity (\ref{eqn:LbyGsquared}) yields 
	\begin{equation}
		\frac{L(u)}{G(1-u)^2}=\frac{L(u_0)}{G(1-u_0)^2}+\int_{u_0}^u\frac{2+L'(z)}{G(1-z)^2}dz
	\end{equation}
	 which proves (\ref{eqn:prescribedFprime}). Finally, by the assumptions, $u\mapsto -\int_{u_0}^u (1+L'(z)-H'(z))/G(1-z)^2 dz$ has its maximum for $u=u^*$, so it follows from (\ref{eqn:prescribedPosCond}) that $F'(u)\geq F'(u*)\geq 0$ for $u\in[u_0,1]$.
\end{proof}

\begin{example}
	If 
	\begin{equation}
		H(u)=\left\{
			\begin{matrix}
				(1-u_0)u/u_0&\text{ if }&u\leq u_0\\
				1-u_0(1-u)/(1-u_0)&\text{ if }&u>u_0
			\end{matrix}
		\right.,
	\end{equation}
 	$G(v)=v^k$ and $k\geq (1-u_0)/(1-2u_0)$, then (\ref{eqn:K}) yields
 	\begin{equation}
 		K(u)=\frac{((1-u)^{1-k}-(1-u_0)^{1-k})u_0}{(1-u_0)(k-1)},
 	\end{equation}
 	(\ref{eqn:prescribedF}) evaluates to
 	\begin{multline}
 		F(u)=\frac{(1-2u_0)k-(1-u_0)}{(2k-1)(k-1)(1-u_0)}(1-u)^{1-k} \\
 		-\frac{(1-u_0)^{1-2k}}{(2k-1)(1-u_0)}(1-u)^k
 		+ \frac{(1-u_0)^{1-k}u_0}{(k-1)(1-u_0)}.
 	\end{multline}
 	Moreover, $L(u)=(1-u)/k$, so $L'(u)-H'(u)=-1/k-u_0/(1-u_0)\geq -1$ if and only if $k\geq (1-u_0)/(1-2u_0)$, in which case $F'(u)$ is positive.  By theorem \ref{thm:prescribed} we obtain a two-parameter family of absolutely continuous copulas (with parameters $0<u_0<1/2$ and $k\geq (1-u_0)/(1-2u_0)$), with probability density supported on $v\leq H(u)$. Indeed, in this example $F'(u)$ can be computed explicitly:
 	\begin{equation}
 		F'(u)=\frac{((1-2u_0)k-(1-u_0))(1-u)^{-k}+k(1-u_0)^{1-2k}(1-u)^{k-1}}{(2k-1)(1-u_0)}
 	\end{equation}
 	and is strictly positive on $[u_0,1)$ if and only if the coefficient for $(1-u)^{-k}$ is positive, which is equivalent to $k\geq (1-u_0)/(1-2u_0)$.
 \end{example}
 \begin{example}\label{exm:Gaussian2}
	In this example we construct more solutions to Problem \ref{prb:standardNormalRestricted}, using Theorem \ref{thm:prescribed2}. Let $k\in\RR, k>1$ and $L(u)=(1-u)/k$. Then we obtain $G(v)=v^k/(1-u_0)^k$ and
	\begin{equation}
		K(u)=(1-u_0)^k\int_{u_0}^u\frac{H'(z)}{(1-z)^k}dz
	\end{equation}
	and
	\begin{equation}
		F(u)=-K(u)+(1-u_0)^k(1-u)^k\int_{u_0}^u\frac{1+H'(z)}{(1-z)^{2k}}dz
	\end{equation}
	where $H$ is given by (\ref{eqn:GaussianH}) and
	\begin{equation}\label{eqn:GaussianHprime}
		H'(z)=\frac1{\sqrt{2\pi}}\exp\left(-\Delta\left(\Phi^{-1}(u)+\frac{\Delta}2\right)\right).
	\end{equation}
	Since $L'(u)=-1/k$ and $H'$ decreasing we have $u^*$ satisfying the assumptions in Theorem \ref{thm:prescribed2} and determined by $H'(u^*)=1-1/k$. Solving this equation yields 
	\begin{equation}
		u^*=\Phi\left(-\frac{\sqrt{2\pi}}{\Delta}\left(1-\frac1k\right)-\frac{\Delta}2\right).
	\end{equation}
	Thus, $1-u^*=\Phi(\sqrt{2\pi}(1-1/k)/\Delta+\Delta/2)$, and also $1-u_0=\Phi(\Delta/2)$, and one can show that condition (\ref{eqn:prescribedPosCond}) is equivalent to
	\begin{equation}
		\int_{u_0}^{u^*}\frac{H'(z)}{(1-z)^{2k}}dz
		\leq\frac{ (1-u_0)^{1-2k}}{2k-1}+\left(1-\frac1k\right)(1-u^*)^{1-2k}
	\end{equation}
	so if $k$ satisfies this condition, an absolutely continuous copula is obtained. 
	\begin{figure}[H]\label{fig:Ckuv}
	$\begin{array}{rl}
	\includegraphics[width=0.5\textwidth]{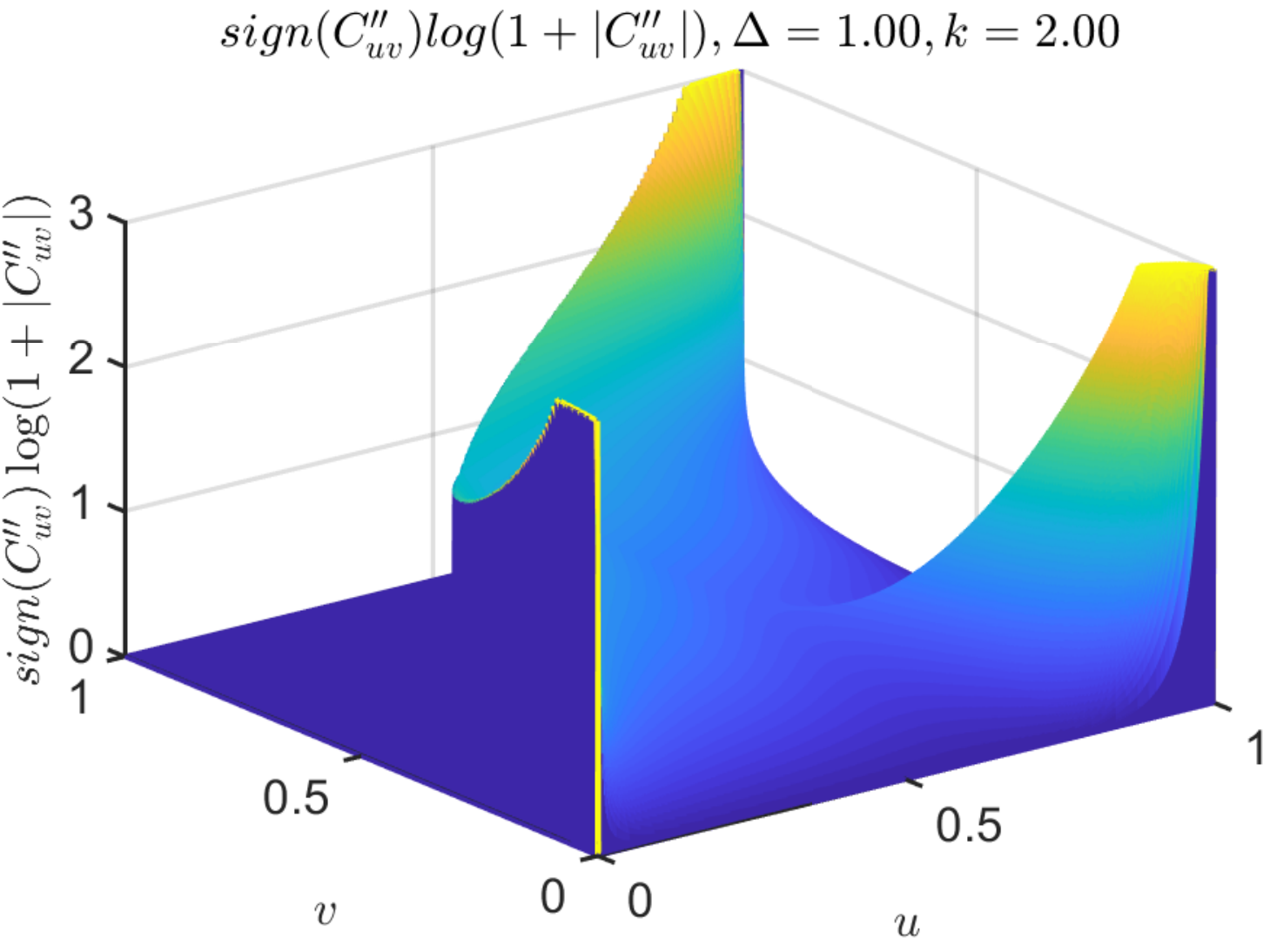}&
	\includegraphics[width=0.5\textwidth]{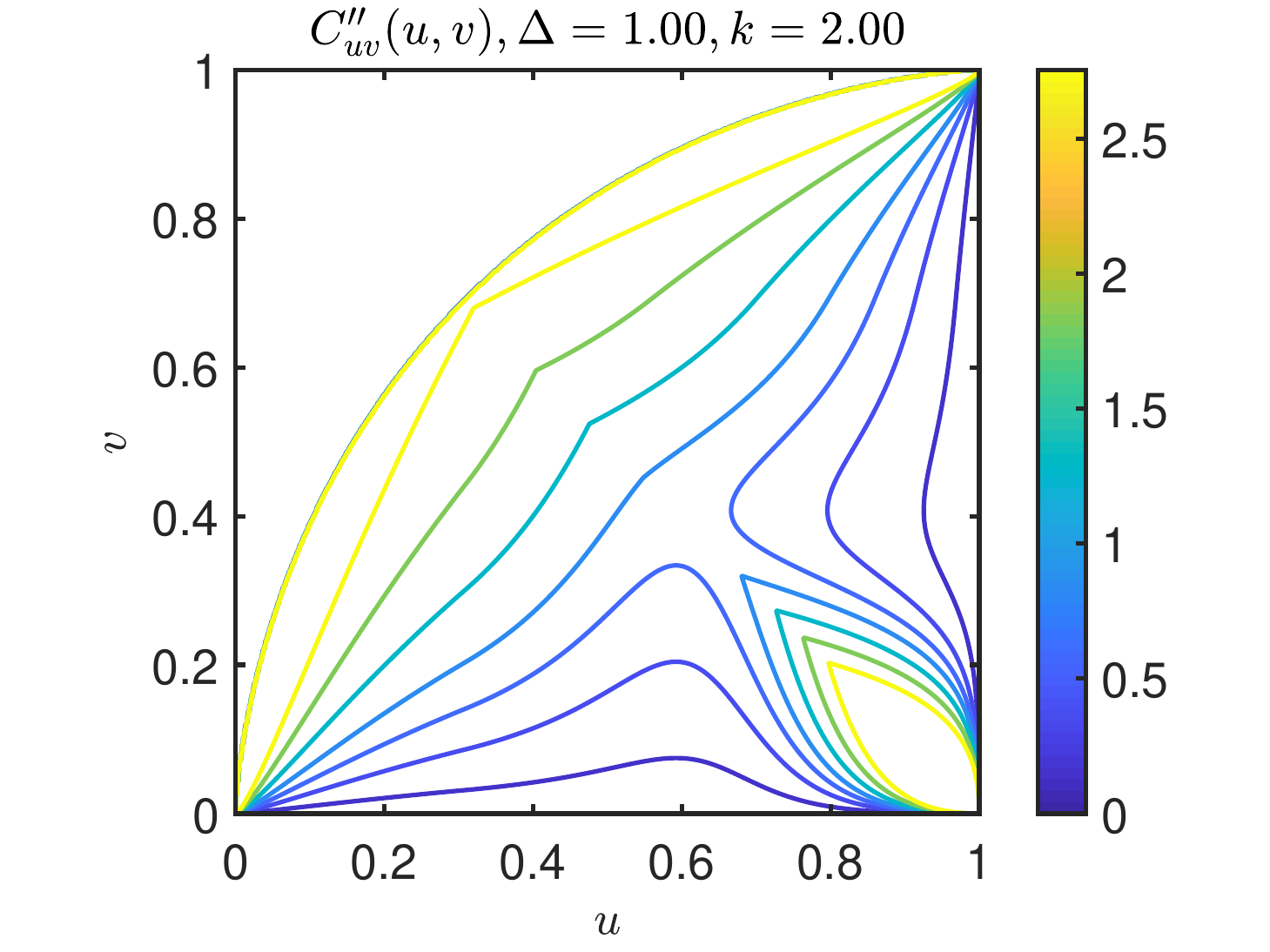}
	\end{array}$
	\caption{Copula density $C''_{uv}(u,v)$ for Example \ref{exm:Gaussian2}, $\Delta=1, k=2$. The density is discontinuous on the curve $v=H(u)$ and tends to infinity when approaching $(0,0)$, $(1,1)$ or $(1,0)$.}
	\end{figure}
	\begin{figure}[H]\label{fig:pk}
	$\begin{array}{rl}
	\includegraphics[width=0.5\textwidth]{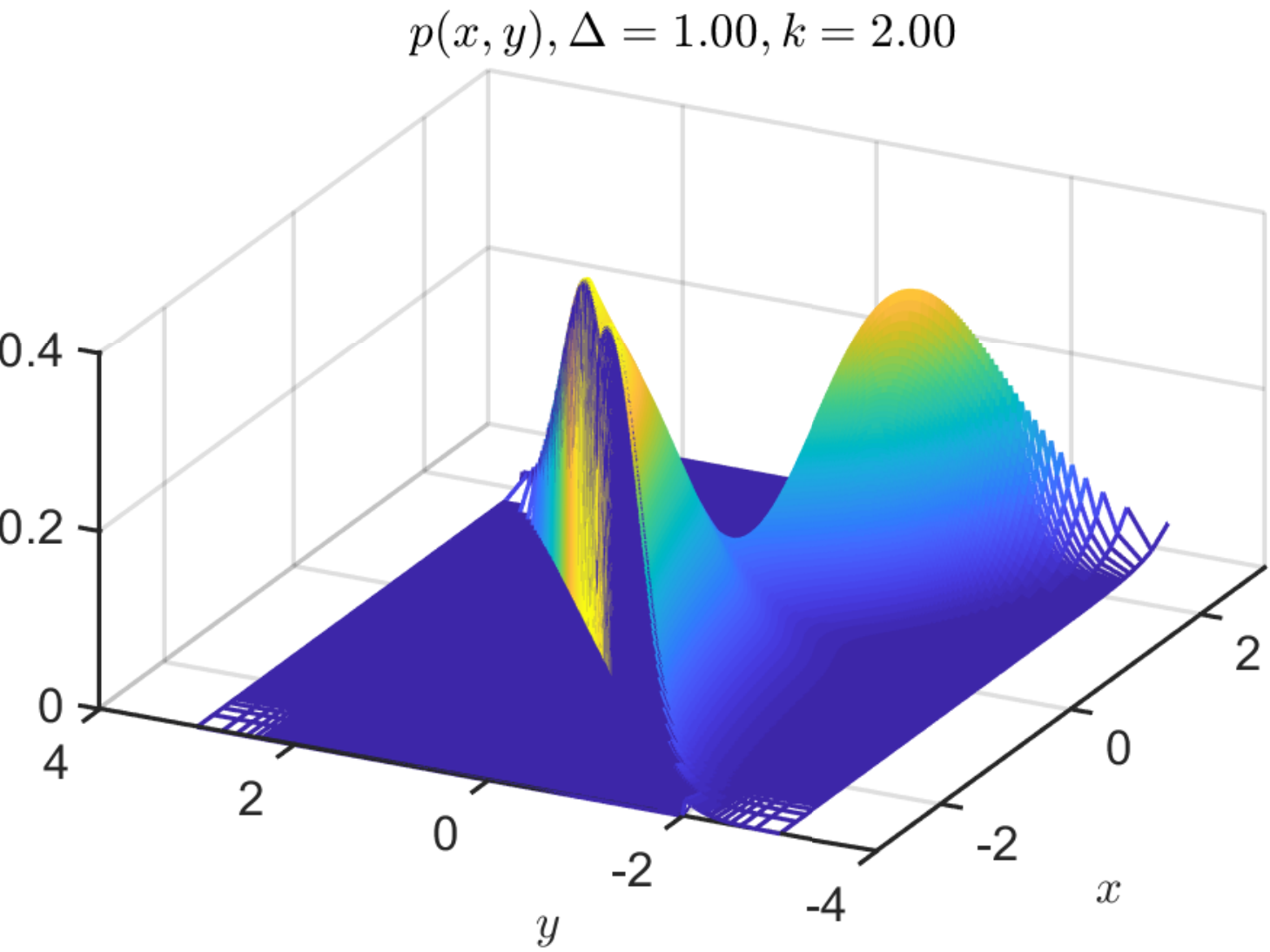}&
	\includegraphics[width=0.5\textwidth]{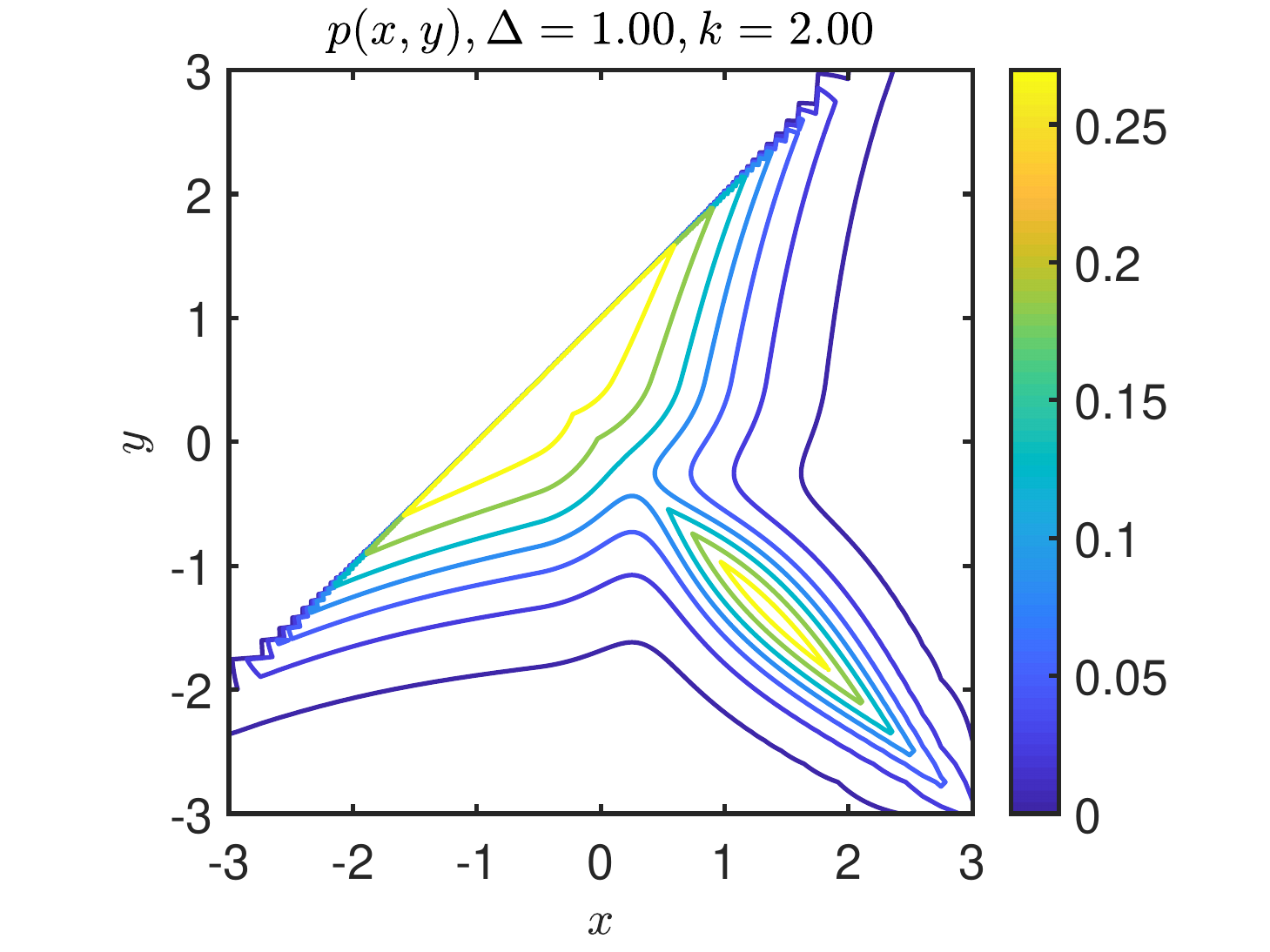}
	\end{array}$
	\caption{Probability density function $p(x,y)$ for Example \ref{exm:Gaussian2}, $\Delta=1, k=2$. The wiggles in the level curves at the upper right and lower left corners of right plot are numerical artifacts.}
	\end{figure}
\end{example}
 We have the following analogue of Theorem \ref{thm:separable3}. Here, given the opposite diagonal section $\omega$, the function $L$ is given by an \emph{integral equation} (\ref{eqn:prescribedLfromOmega}), (\ref{eqn:prescribed3GK}) below.
\begin{theorem}\label{thm:prescribed3}
	Suppose that $H, u_0$ satisfies (\ref{eqn:Hcond}) and (\ref{eqn:u0cond}). Suppose also that $L$ is a positive real--valued function defined on $[u_0,1]$ such that
	\begin{equation}\label{eqn:prescribed3Lcond1}
		\int_{u_0}^u \frac{dz}{L(z)}<\infty
	\end{equation}
	for $u\in[u_0,1)$ and
	\begin{equation}\label{eqn:prescribed3Lcond2}
		\lim_{u\to 1-}\int_{u_0}^u \frac{dz}{L(z)}=\infty.
	\end{equation}
	Let
	\begin{equation}\label{eqn:prescribedGfromL}
		G(v)=\exp\left(-\int_{u_0}^{1-v}\frac{dz}{L(z)}\right)
	\end{equation}
	 Moreover, let $K(u)$ and $F(u)$ be given by (\ref{eqn:K}) and (\ref{eqn:prescribedF}) and suppose that (\ref{eqn:prescribedPosCond}) holds true. Then $C$ given by (\ref{eqn:prescribedC1})-(\ref{eqn:prescribedC3}) and (\ref{eqn:oppositeSymmetryC}) is an absolutely continuous copula. Moreover, the opposite diagonal section (\ref{eqn:separableOppositeDiagonal})
	satisfies $\omega(u)=u$ for $u\in[0,u_0]$ and 
	\begin{equation}\label{eqn:prescribedLfromOmega}
		L(u)=\frac{2\omega(u)+G(1-u)K(u)}{1-\omega'(u)}
	\end{equation}
	for $u\in[u_0,1]$, where
	\begin{equation}\label{eqn:prescribed3GK}
		G(1-u)K(u)=\int_{u_0}^u\exp\left(-\int_z^u\frac{dw}{L(w)}\right)H'(z)dz
	\end{equation}
\end{theorem}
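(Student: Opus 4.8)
The plan is to follow the template of the proof of Theorem~\ref{thm:separable3}: first check that the function $G$ built from $L$ meets the standing hypotheses of Theorems~\ref{thm:prescribed} and~\ref{thm:prescribed2}, then read off that $C$ is an absolutely continuous copula, and finally compute the opposite diagonal section $\omega$ and differentiate to recover $L$.

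First I would verify the properties of $G$ in (\ref{eqn:prescribedGfromL}). Differentiating under the exponential gives $G'(v)=G(v)/L(1-v)\ge 0$, so $G$ is (strictly) increasing; the divergence hypothesis (\ref{eqn:prescribed3Lcond2}) forces $G(0)=0$, while the empty integral gives $G(1-u_0)=1$. Evaluating at $v=1-u$ yields $G'(1-u)=G(1-u)/L(u)$, which is exactly relation (\ref{eqn:L}), $L(u)=G(1-u)/G'(1-u)$; thus $G$ is the function associated to an admissible $L$. With $G$ in hand and $F$ defined by (\ref{eqn:prescribedF}), Theorem~\ref{thm:prescribed2} guarantees that $F$ solves the ODE (\ref{eqn:prescribedODE}) and, under (\ref{eqn:prescribedPosCond}), that $F'\ge 0$. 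Theorem~\ref{thm:prescribed} then yields that $C$ defined by (\ref{eqn:prescribedC1})--(\ref{eqn:prescribedC3}) and (\ref{eqn:oppositeSymmetryC}) is an absolutely continuous copula.

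For the diagonal section I would split at $u_0$. For $u\in[0,u_0]$ the point $(u,1-u)$ lies in the region where (\ref{eqn:prescribedC2}) applies, since $H(u)\le H(u_0)=1-u_0\le 1-u$; hence $\omega(u)=u$. For $u\in[u_0,1]$ I evaluate (\ref{eqn:prescribedC3}) at $v=1-u$ and use the symmetry (\ref{eqn:symmetricH}) in the form $H^{-1}(1-u)=1-H(u)$ to obtain $\omega(u)=1-H(u)+(F(u)+K(u))G(1-u)$. Differentiating, using $K'(u)=H'(u)/G(1-u)$ from (\ref{eqn:Kprime}) and $\tfrac{d}{du}G(1-u)=-G'(1-u)$, the two $H'$ terms cancel and leave $\omega'(u)=F'(u)G(1-u)-(F(u)+K(u))G'(1-u)$. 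Substituting the ODE (\ref{eqn:prescribedODE}) to eliminate $F'(u)G(1-u)$ collapses this to $1-\omega'(u)=2(F(u)+K(u))G'(1-u)$. Since $G'(1-u)=G(1-u)/L(u)$, solving for $L$ gives $L(u)=2(F(u)+K(u))G(1-u)/(1-\omega'(u))$, and the change of variables $z=1-H(w)$ used in (\ref{eqn:Kb}) rewrites $G(1-u)K(u)$ as the integral in (\ref{eqn:prescribed3GK}), turning the identity into the stated integral equation for $L$.

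The main obstacle is the bookkeeping in this differentiation step together with its behaviour at the gluing point $u_0$. There $F(u_0)+K(u_0)=0$ and $G(1-u_0)=1$ force $\omega'(u_0)=F'(u_0)=1$ by the ODE, so $1-\omega'$ vanishes at $u_0$ and the quotient defining $L$ must be read as a limit; I would verify that the numerator $2(F+K)G(1-u)$ also vanishes at $u_0$, so that $L(u_0)$ is finite and positive in accordance with the hypothesis on $L$, and confirm that $\omega$ and $\omega'$ match across $u_0$ from both sides. Reconciling the closed expression $2(F(u)+K(u))G(1-u)$ with the right-hand side of (\ref{eqn:prescribedLfromOmega}) through (\ref{eqn:prescribed3GK}) is the one algebraic point that must be carried out carefully, since it is precisely where the dependence of $L$ on $\omega$ becomes implicit.
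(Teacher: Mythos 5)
Your overall strategy coincides with the paper's: build $G$ from $L$, check it satisfies the standing hypotheses so that Theorems \ref{thm:prescribed} and \ref{thm:prescribed2} give an absolutely continuous copula, then evaluate $\omega(u)=C(u,1-u)$ piecewise and differentiate. The first half of your argument is fine, and your computation for $u\in[u_0,1]$ is correct: $\omega(u)=1-H(u)+(F(u)+K(u))G(1-u)$, the $H'$ terms cancel, $\omega'(u)=F'(u)G(1-u)-(F(u)+K(u))G'(1-u)$, and combining with (\ref{eqn:prescribedODE}) gives $1-\omega'(u)=2(F(u)+K(u))G'(1-u)$, hence $(1-\omega'(u))L(u)=2(F(u)+K(u))G(1-u)$.

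The gap is precisely the step you defer as ``bookkeeping'': the reconciliation of $2(F(u)+K(u))G(1-u)$ with $2\omega(u)+G(1-u)K(u)$ cannot be carried out. From your own expression for $\omega$, $(F+K)G(1-u)=\omega(u)-1+H(u)$, so your derivation yields $(1-\omega'(u))L(u)=2\bigl(\omega(u)+H(u)-1\bigr)$, whereas (\ref{eqn:prescribedLfromOmega}) asserts $(1-\omega'(u))L(u)=2\omega(u)+G(1-u)K(u)$; the two numerators differ by $2(1-H(u))+G(1-u)K(u)$, a sum of nonnegative terms vanishing only at $u=1$. Your own remark at the gluing point already exposes this: at $u=u_0$ one has $1-\omega'(u_0)=0$ while the claimed numerator equals $2\omega(u_0)+0=2u_0>0$, which would force $L(u_0)=\infty$ against the hypothesis that $L$ is finite and positive; your numerator does vanish there. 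A concrete test ($u_0=1/4$, $L(u)=1-u$, $H$ piecewise linear with $H(1/4)=3/4$) confirms that $2(\omega+H-1)/(1-\omega')$ returns $L(u)=1-u$ exactly while $(2\omega+GK)/(1-\omega')$ does not. So your computation is sound but proves a different identity than the one stated; note that the paper's own route passes through (\ref{eqn:prescribed3FprimeG})--(\ref{eqn:prescribed3FGprime}), which subtract to $\frac{d}{du}\bigl(F(u)G(1-u)\bigr)=\omega'(u)$ and are therefore inconsistent with $\omega=1-H+(F+K)G(1-\cdot)$ unless $K(u)G'(1-u)\equiv0$. You should either carry your identity to its (corrected) conclusion $L(u)=2(\omega(u)+H(u)-1)/(1-\omega'(u))$ or explicitly flag the discrepancy, rather than leave the final step unexamined.
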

\begin{proof}
	The proof is similar to the proof of Theorem \ref{thm:separable3}, with some additional terms involving $K$. More precisely, (\ref{eqn:separable3FprimeG}) and (\ref{eqn:separable3FGprime}) are replaced by 
	\begin{equation}\label{eqn:prescribed3FprimeG}
		G(1-u)F'(u)=\frac{1+\omega'(u)-G'(1-u)K(u)}2
	\end{equation}
	and
	\begin{equation}\label{eqn:prescribed3FGprime}
		G'(1-u)F(u)=\frac{1-\omega'(u)-G'(1-u)K(u)}2.
	\end{equation}
	Solving for $F$ in (\ref{eqn:prescribed3FGprime}), differentiating and substituting for $F'$ in the left hand side of (\ref{eqn:prescribed3FprimeG}) yields
	\begin{multline}
		(1-\omega'(u))(1+L'(u))-\omega''(u)L(u)\\
		=1+\omega'(u)+K'(u)G(1-u)-K(u)G'(1-u)
	\end{multline}
	which is integrated to $(1-\omega'(u))L(u)=2\omega(u)+G(1-u)K(u)+$constant. The equation (\ref{eqn:prescribed3GK}) follows from (\ref{eqn:K}) and (\ref{eqn:prescribedGfromL}). For each fixed $z$, the integrand in (\ref{eqn:prescribed3GK}) is decreasing towards $0$ as $u\to1-$ in view of (\ref{eqn:prescribed3Lcond1}) and (\ref{eqn:prescribed3Lcond2}), so by the mononotone convergence theorem, $\lim_{u\to1-}G(1-u)K(u)=0$. Hence the constant of integration is zero, which proves (\ref{eqn:prescribedLfromOmega}).
\end{proof}
\begin{proof}[Proof of Theorem \ref{thm:main}]
	Since $1+L'(z)-H'(z)\equiv 0$, $L(u_0)=H(u_0)-u_0=1-2u_0$ and $G(1-u_0)=1$ we have by (\ref{eqn:prescribedFprime})
	\begin{equation}\label{eqn:mainFprime}
		F'(u)=(1-2u_0)G'(1-u)
	\end{equation}
	so $F(u)$ satisfies (\ref{eqn:mainF}). Solving for $K$ in (\ref{eqn:prescribedODE}) yields
	\begin{equation}\label{eqn:prescribedODE3}
		K(u)=\frac1{G'(1-u)}-\frac{G(1-u)}{G'(1-u)}F'(u)-F(u)
	\end{equation}
	and substituting (\ref{eqn:mainF}) and (\ref{eqn:mainFprime}) in (\ref{eqn:prescribedODE3}) implies that $K(u)$ satisfies (\ref{eqn:mainK}).
\end{proof}
\section{Sampling} 
\label{sec:sampling}
To sample from a two--dimensional copula $C(u,v)$ we use the conditional density $C'_u$ of Corollary \ref{cor:prescribed2} in the following way (cf. \cite[Chap. 2.9]{Nelsen1999}): First sample $U,T$, independently from $U(0,1)$. Then for each $T_i,T_i$ let $V_i$ satisfy $T_i=C'_u(U_i,V_i)$. Then $(U_i,V_i)$ is distributed according to $C(u,v)$. For sampling from the copula, the following corollary is useful:

 \begin{corollary}\label{cor:prescribed2}
 	Suppose that $C(u,v)$ is an absolutely continuous copula given by Theorem \ref{thm:prescribed} and $F,G,K$ defined accordingly. Then $C'_u(u,v)$ is given by the following formulas:
 	\begin{enumerate}
 		\item If $0\leq u\leq u_0$ and $0<v<H(u)$ then
 		\begin{equation}\label{eqn:prescribedCu1b}
 			C'_u(u,v)=G(v)/G(H(u))
 		\end{equation}
 		\item If $0\leq u\leq 1$ and $H(u)\leq v\leq 1$ then 
 		\begin{equation}\label{eqn:prescribedCu2b}
 			C'_u(u,v)=1
 		\end{equation}
 		\item If $u_0<u<1$ and $0<v\leq 1-u$ then
 		\begin{equation}\label{eqn:prescribedCu3b}
 			C'_u(u,v)=(1-G'(1-u)(K(u)+F(u)))G(v)/G(1-u)
 		\end{equation}
 		\item If $u_0<u<1$ and $1-u<v\leq 1-u_0$ then
 		\begin{equation}\label{eqn:prescribedCu4b}
 		 	C'_u(u,v)=1-G'(1-u)(K(u)+F(1-v))
 		 \end{equation}
 		 \item If $u_0<u<1$ and $1-u_0<v\leq H(u)$ then
 		 \begin{equation}\label{eqn:prescribedCu5b}
 		 	C'_u(u,v)=1-G'(1-u)(K(u)-F(1-H(1-v)))
 		 \end{equation}
 		 \item If $u_0<u<1$ and $H(u)<v<1$ then $C'_u(u,v)=1$.
 	\end{enumerate}
 \end{corollary}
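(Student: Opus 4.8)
The plan is to obtain all six formulas by simplifying the region-by-region expressions for $C'_u$ that were already assembled in the proof of Theorem \ref{thm:prescribed}, the only new ingredient being the elimination of $F'(u)$ through the governing differential equation. Recall from that proof that $C'_u(u,v)=\int_0^v C''_{uv}(u,z)\,dz$ equals $G(v)/G(H(u))$ on region~1, $F'(u)G(v)$ on region~4, $F'(u)G(1-u)+(F(u)-F(1-v))G'(1-u)$ on region~5, and $F'(u)G(1-u)+(F(u)+K(H^{-1}(v)))G'(1-u)$ on region~6, while $C''_{uv}$ vanishes on regions~2,3,7. Since $C$ is assumed to be an absolutely continuous copula, alternative~1 of Theorem \ref{thm:prescribed} applies, so (\ref{eqn:prescribedODE}) holds and may be rearranged as $F'(u)G(1-u)=1-G'(1-u)(F(u)+K(u))$; this single identity drives every simplification.

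With it the cases follow quickly. Case~1 is region~1 unchanged. Cases~2 and~6 I would dispatch together: above the support curve $v=H(u)$ the density $C''_{uv}(u,\cdot)$ is identically zero, so $v\mapsto C'_u(u,v)$ is constant on $\{v\ge H(u)\}$ and hence equals its terminal value $C'_u(u,1)=1$, the final equality being the uniform--marginal property of a copula. Case~3 is region~4 after substituting $F'(u)=(1-G'(1-u)(F(u)+K(u)))/G(1-u)$, which gives the stated factorised form times $G(v)$. Cases~4 and~5 are regions~5 and~6 treated identically: inserting the rearranged ODE makes the $G'(1-u)F(u)$ terms cancel against the ones concealed inside $F'(u)G(1-u)$, leaving $1-G'(1-u)(K(u)+F(1-v))$ in region~5 and $1-G'(1-u)(K(u)-K(H^{-1}(v)))$ in region~6, after which the arguments are tidied with the symmetry $H^{-1}(v)=1-H(1-v)$ coming from (\ref{eqn:symmetricH}).

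I expect the one genuinely delicate point to be the bookkeeping behind region~6, namely the identification of the density integral there with $K(H^{-1}(v))$; as in the proof of Theorem \ref{thm:prescribed} this rests on the alternate representation (\ref{eqn:Kb}) of $K$ together with the change of variables $z=1-H(w)$ and the symmetry (\ref{eqn:symmetricH}). Everything else is routine substitution into formulas already in hand. As a built-in consistency check I would confirm continuity of $C'_u$ across the seam $v=1-u_0$ separating regions~5 and~6: both expressions there collapse to $1-G'(1-u)K(u)$, since $F(u_0)=0$ and $H^{-1}(1-u_0)=u_0$ force $F(1-v)=0$ and $K(H^{-1}(v))=K(u_0)=0$ at that value, which confirms that the pieces patch together correctly.
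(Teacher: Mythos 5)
Your route is exactly the paper's: the published proof is a one--line pointer to the region--by--region expressions for $C'_u$ assembled in the proof of Theorem \ref{thm:prescribed}, combined with (\ref{eqn:symmetricH}) and (\ref{eqn:prescribedODE}), and you have simply carried out those substitutions. Cases 1--4 and 6 check out as you describe them.

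The one problem is case 5. Your derivation correctly yields
\begin{equation*}
C'_u(u,v)=1-G'(1-u)\bigl(K(u)-K(H^{-1}(v))\bigr)=1-G'(1-u)\bigl(K(u)-K(1-H(1-v))\bigr),
\end{equation*}
with $K$ in the inner argument, whereas the statement you were asked to prove, (\ref{eqn:prescribedCu5b}), has $F(1-H(1-v))$. You never reconcile the two, and they are not equal: $F$ and $K$ are different functions (compare (\ref{eqn:K}) with (\ref{eqn:prescribedF})). In fact your formula is the correct one, and (\ref{eqn:prescribedCu5b}) as printed appears to contain a typo ($F$ where $K$ is meant). To see this, note that the density vanishes for $v>H(u)$, so $C'_u(u,\cdot)$ must already equal its terminal value $1$ at $v=H(u)$; there $1-H(1-v)=u$ by (\ref{eqn:symmetricH}), so the $K$--version gives $1-G'(1-u)(K(u)-K(u))=1$ while the $F$--version gives $1-G'(1-u)(K(u)-F(u))$, which is not identically $1$. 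Your built--in consistency check at the seam $v=1-u_0$ is too weak to detect this, because $F(u_0)=K(u_0)=0$ makes both versions collapse to $1-G'(1-u)K(u)$ there; the discriminating check is at the seam $v=H(u)$. You should state explicitly that your derived expression does not literally match the corollary and that the corollary should read $K(1-H(1-v))$.
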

\begin{proof}
	Follows from the equations for $C'_u$ in the proof of Theorem \ref{thm:prescribed}, and equations (\ref{eqn:symmetricH}), (\ref{eqn:prescribedODE}).
\end{proof}
Figure \ref{fig:sampling} illustrates sampling in Example \ref{exm:Gaussian}.
\begin{figure}[H]\label{fig:sampling}
	$\begin{array}{rl}
	\includegraphics[width=0.5\textwidth]{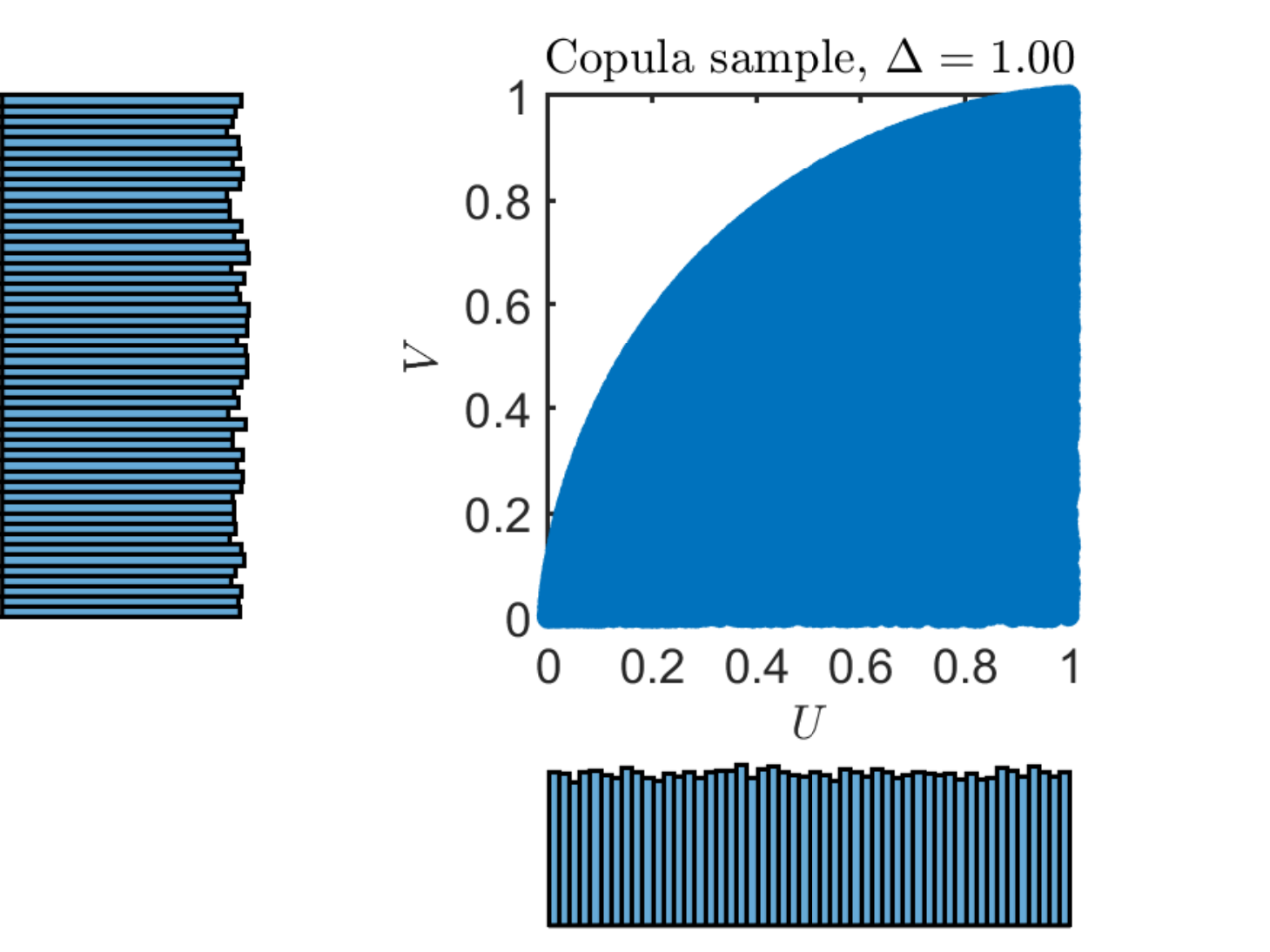}&
	\includegraphics[width=0.5\textwidth]{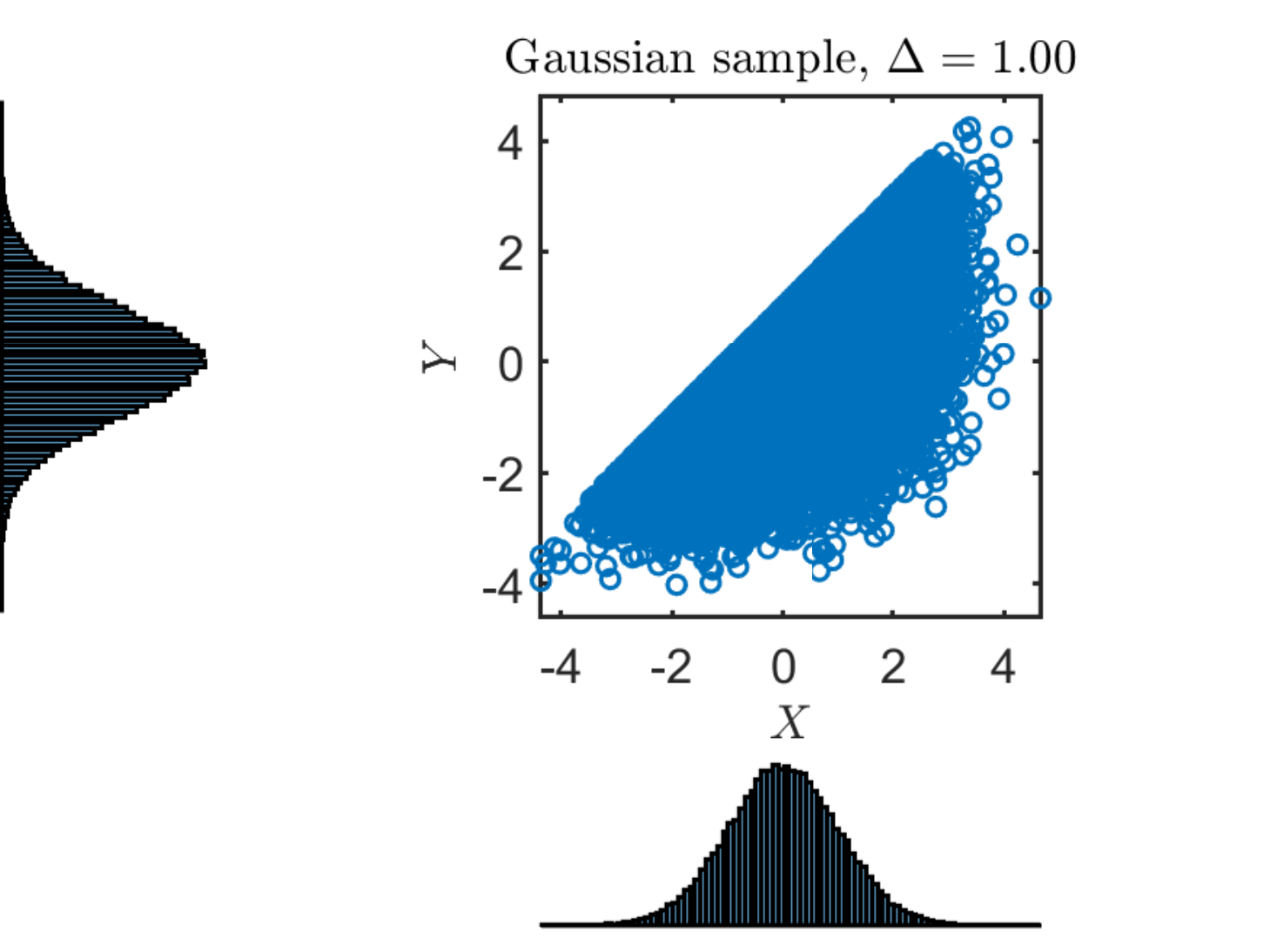}
	\end{array}$
	\caption{Samples from distributions in Example \ref{exm:Gaussian}, sample size $10^5$.}
	\end{figure}

\section{Application to toxicological probit models}\label{sec:Toxicology}
The probit model is the standard statistical method for estimating the injury outcome of a population exposed to a toxic substance. It originates from an analysis on the effect of pesticides conducted by Bliss in 1934 \cite{Bliss1938}. The methodology was later cast in a more rigid mathematic formulation by Finney \cite{FinneyTattersfield1952,Finney1977}. It has since then been used frequently in toxicological assessments of the injury outcome when a population has been exposed to dangerous chemicals \cite{McBride2001,Bjornham_etal2017,BurmanJonsson2015,Hauptmanns2005,Lovreglio_etal2016,Stage2004,Haghnazarloo2015}. In short, the probit model operates as follows. The exposure concentration $c(t)$ is integrated over time to yield \emph{probit values}
\begin{equation}\label{eqn:Gamma_i}
	\Gamma_i(t)=\alpha_i+\beta_i\log\left(\int_0^t c(t)^{n_i} dt\right).
\end{equation}
The fraction of the population that has attainted the injury at time $t$ is then estimated by
\begin{equation}\label{eqn:InjuryProbability}
  	\Phi(\Gamma_i(t))
\end{equation}
where $\alpha_i,\beta_i, n_i$ are model parameters associated with the substance, and $\Phi$ is the CDF for a standard normal variable. There are often several levels of injury outcome used in toxicology, e.g., \emph{light injury, severe injury} and \emph{death}. These different injury levels are indexed by $i=1,2,...$ in equations (\ref{eqn:Gamma_i})-(\ref{eqn:InjuryProbability}). The fraction of the population that obtains an injury increases continuously with growing exposure due to the individual variation of the toxic susceptiblity within the population. It is believed that modeling this variation improves the quantitative toxicological risk assessment, cf. \cite{HattisBanatiGoble1999}.

A population that is not resolved on an individual level is referred to as a \emph{macroscopic population} and can be described as a density field. In contrast, a population can be described as a set of discrete individuals, referred to as \emph{agents}. A model that uses this type of population representation is called a \emph{microscale model} or an \emph{agent--based model}. In an agent--based toxicological model, see for example \cite{Lovreglio_etal2016}, the overall population statistics is obtained from the set of agents that are exposed to the toxic substance. In such a setting, individual probit values $\Gamma_i(t)$, acquired by exposure to individual model concentrations $c(t)$, are computed for each agent. In the transition from a macroscopic population to an agent--based population, it is convenient to distribute individual threshold values, $\gamma_i$, for the probit values to all agents representing their susceptibilities. Thus, when an agent has been exposed to a concentration yielding a probit value exceeding the corresponding threshold value, the agent has acquired that injury. Every agent is attributed one threshold value for each injury level. These threshold values are drawn from a standard normal distribution to maintain the overall probability distribution for the entire population. This method implies that the injury outcome of the agent--based population approaches asymptotically that of the macrosopic population (with static populations) when the number of agents increases. An advantage with an agent--based population is that the agents may have individual properties including their movement patterns. In a dynamic simulation, each agent follows its individual spacetime path, passing through concentration fields, and thereby proceeds through some or all of the injury stages, transiting successive injury stages when the agent's increasing probit functions $\Gamma_i(t)$ pass their threshold values $\gamma_i$. As mentioned, the individual toxic susceptibility thresholds $\gamma_i$ are random variables and must obey the requirement
\begin{equation}
	P(\gamma_i\leq \Gamma)=\Phi(\Gamma)
\end{equation}
We propose that the $\gamma_1, \gamma_2,...$ are  modeled as a discrete time Markov process with absolutely continuous transition densities $p_{i+1\mid i}$, so by the Markov property, the joint density $p$ is
 \begin{equation}
 	p(\gamma_1,...,\gamma_n)=p_1(\gamma_1)p_{2\mid1}(\gamma_2\mid\gamma_1)p_{3\mid2}(\gamma_3\mid\gamma_2)...p_{n\mid n-1}(\gamma_n\mid\gamma_{n-1}).
 \end{equation}
However, there is a potential pitfall: the injury stages must be passed in the correct order. Therefore, it must be true with probability one that if an injury level is acquired, then also the previous injury level is acquired, i.e.
\begin{equation}\label{eqn:pdfSupportConclusion}
 	\gamma_{i+1}\leq \Gamma_{i+1}(t)\implies \gamma_i\leq \Gamma_i(t).
 \end{equation}
 Therefore, the transition densities $p_{i+1\mid i}$ must satisfy
\begin{equation}
	p_{i+1\mid i}(\gamma_{i+1}\mid\gamma_i)=0\text{ if }\gamma_{i+1}\leq\Gamma_{i+1}(t)\text{ and } \gamma_i>\Gamma_i(t).
\end{equation}
 This imposes a restriction on the support of the joint probability density of $(\gamma_i,\gamma_{i+1})$, which we need to investigate in order to ensure that the model is consistent. To this end, we need to relate possible values of $\Gamma_i(t),\Gamma_{i+1}(t)$ for all possible exposures $c(t)$, $t\geq 0$. This can be done in terms of
 \begin{equation}\label{eqn:toxicLoad}
 	\frac{\Gamma_i(t)-\alpha_i}{\beta_i}=\log\left(\int_0^tc^{n_i}dt\right)
 \end{equation}
 according to the following lemma:

 \begin{lemma}\label{lem:toxicLoadInequalities}
 	Assume that $n\geq m>0$ and $c\geq 0, t>0$. Then
 		\begin{equation}\label{eqn:toxicLoadInequality1}
 			\log\left(\int_0^t c^m dt\right)\leq \frac mn\log\left(\int_0^t c^n dt\right)+\left(1-\frac mn\right)\log(t)
 		\end{equation}
 		and
 		\begin{equation}\label{eqn:toxicLoadInequality2}
 			\log\left(\int_0^t c^n dt\right)\leq \log\left(\int_0^t c^m dt\right)+(n-m)\log\left(\max_{[0,t]}c\right)
 		\end{equation}
 	Moreover, the inequalities are sharp: if $c(t)=$constant, then equalities holds in the inequalities above. 
 \end{lemma}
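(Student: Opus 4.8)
The plan is to read the two bounds as statements about the integrals after exponentiation, and to recognize (\ref{eqn:toxicLoadInequality1}) as a logarithmic form of H\"older's inequality and (\ref{eqn:toxicLoadInequality2}) as the consequence of an elementary pointwise estimate. Throughout I write the integrals with a dummy variable $s$, so $\int_0^t c^m\,ds$ etc. If $n=m$ both claims are trivial (the first is an equality, the second reduces to $0$ on the right), so I would assume $n>m$ in the main argument.

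For (\ref{eqn:toxicLoadInequality1}), exponentiating shows it is equivalent to
\[
\int_0^t c^m\,ds \le \left(\int_0^t c^n\,ds\right)^{m/n}\, t^{\,1-m/n}.
\]
I would apply H\"older's inequality to the product $c^m\cdot 1$ with conjugate exponents $p=n/m$ and $q=n/(n-m)$, which satisfy $1/p+1/q=1$ since $n>m$. This gives
\[
\int_0^t c^m\,ds \le \left(\int_0^t (c^m)^{n/m}\,ds\right)^{m/n}\left(\int_0^t 1\,ds\right)^{(n-m)/n}
= \left(\int_0^t c^n\,ds\right)^{m/n} t^{\,1-m/n},
\]
and taking logarithms recovers (\ref{eqn:toxicLoadInequality1}).

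For (\ref{eqn:toxicLoadInequality2}) I would start from the pointwise bound $c^n = c^m\,c^{\,n-m}\le c^m\left(\max_{[0,t]}c\right)^{n-m}$, valid because $0\le c\le \max_{[0,t]}c$ and $n-m\ge 0$. Integrating over $[0,t]$ yields $\int_0^t c^n\,ds \le \left(\max_{[0,t]}c\right)^{n-m}\int_0^t c^m\,ds$, and taking logarithms gives (\ref{eqn:toxicLoadInequality2}). Sharpness then follows by substituting a constant $c\equiv C$: each integral becomes a power of $C$ times $t$, and both sides of (\ref{eqn:toxicLoadInequality1}) simplify to $m\log C+\log t$ while both sides of (\ref{eqn:toxicLoadInequality2}) simplify to $n\log C+\log t$.

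I do not anticipate a genuine obstacle; the content is essentially H\"older plus monotonicity. The only points requiring care are the degenerate case $n=m$ (where the H\"older conjugate $q=n/(n-m)$ is infinite and the bound degenerates to an equality), and the existence of $\max_{[0,t]}c$, which is guaranteed under the implicit assumption that $c$ is continuous on $[0,t]$; otherwise one replaces the maximum by an essential supremum and the same argument goes through.
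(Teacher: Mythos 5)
Your proof is correct and follows essentially the same route as the paper: H\"older's inequality with conjugate exponents $n/m$ and $n/(n-m)$ applied to $c^m\cdot 1$ for the first bound, and the elementary estimate $\int f^p \le (\max f)^{p-1}\int f$ (your pointwise factorization $c^n=c^m c^{n-m}$ is the same computation) for the second. Your additional remarks on the degenerate case $n=m$ and the verification of sharpness for constant $c$ are fine and slightly more explicit than the paper's one-line proof.
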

 \begin{proof}
 	Apply H\"older's inequality $\int fg dt\leq\left(f^p dt\right)^{1/p}\left(g^q dt\right)^{1/q}$ and the elementary estimate $\int f^p dt\leq(\max f)^{p-1}\int f dt$ with $f=c^m$, $g=1$ and $p=n/m$. 
 \end{proof}
The following theorems provide sufficient conditions for (\ref{eqn:pdfSupportConclusion}), and necessary compatibility conditions for the probit parameters $\alpha,\beta,n$.
\begin{theorem}\label{thm:pdfSupport1}
 	Assume that $\Gamma_i(t), \Gamma_{i+1}(t)$ are probit functions defined by (\ref{eqn:Gamma_i}), and $n_{i+1}\leq n_i$. Also assume that $(\gamma_i,\gamma_{i+1})$ is a bivariate random variable such that
 		 \begin{equation}\label{eqn:pdfSupport1}
 			\frac{\gamma_{i+1}-\alpha_{i+1}}{\beta_{i+1}}
 			\geq \frac{n_{i+1}}{n_i}\frac{\gamma_i-\alpha_i}{\beta_i}
 			+ \left(1-\frac{n_{i+1}}{n_i}\right)\log(t)
 		\end{equation}
	almost surely. Then $\gamma_{i+1}\leq\Gamma_{i+1}(t)\implies\gamma_i\leq\Gamma_i(t)$ almost surely. Moreover, there exists standard normal $\gamma_i, \gamma_{i+1}$ satisfying (\ref{eqn:pdfSupport1}) if and only if 
	\begin{equation}\label{eqn:probitCond1beta}
			n_{i+1}\beta_{i+1}=n_i\beta_i
	\end{equation}
	and
	\begin{equation}\label{eqn:probitCond1Delta}
			\Delta_i\equiv\alpha_i-\alpha_{i+1}-\beta_{i+1}\left(1-\frac{n_{i+1}}{n_i}\right)\log t\geq0,
	\end{equation}
	and then if $\Delta_i>0$ there exists $(\gamma_{i},\gamma_{i+1})$ with absolutely continuous joint density.
 \end{theorem}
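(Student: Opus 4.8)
The plan is to treat the two assertions separately: first the almost-sure implication $\gamma_{i+1}\leq\Gamma_{i+1}(t)\implies\gamma_i\leq\Gamma_i(t)$, which I expect to follow directly from Lemma \ref{lem:toxicLoadInequalities}, and then the existence/compatibility statement, which I would reduce to Proposition \ref{pro:linearConstraintStandardNormal}. Throughout I would use the standing fact that a probit slope is positive, i.e.\ $\beta_i,\beta_{i+1}>0$, together with $n_i\geq n_{i+1}>0$.

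For the implication, I would suppose (\ref{eqn:pdfSupport1}) holds and $\gamma_{i+1}\leq\Gamma_{i+1}(t)$. Dividing the latter by $\beta_{i+1}>0$ and using (\ref{eqn:Gamma_i}) gives $(\gamma_{i+1}-\alpha_{i+1})/\beta_{i+1}\leq\log(\int_0^t c^{n_{i+1}}dt)$. Feeding this into (\ref{eqn:pdfSupport1}) and then applying inequality (\ref{eqn:toxicLoadInequality1}) with $m=n_{i+1}$, $n=n_i$, the two $(1-n_{i+1}/n_i)\log t$ terms cancel, leaving $\tfrac{n_{i+1}}{n_i}\tfrac{\gamma_i-\alpha_i}{\beta_i}\leq\tfrac{n_{i+1}}{n_i}\log(\int_0^t c^{n_i}dt)$. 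Dividing by $n_{i+1}/n_i>0$ and multiplying by $\beta_i>0$ yields $\gamma_i\leq\Gamma_i(t)$. Since every step preserves almost-sure inequalities, the conclusion holds almost surely.

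For the second part, I would recast (\ref{eqn:pdfSupport1}) as a linear constraint. Multiplying by $\beta_{i+1}>0$ and collecting terms puts it in the form $\gamma_{i+1}\geq a\gamma_i+b$, with $a=\beta_{i+1}n_{i+1}/(\beta_i n_i)$ and $b=\alpha_{i+1}-a\alpha_i+\beta_{i+1}(1-n_{i+1}/n_i)\log t$. Negating and setting $X=-\gamma_i$, $Y=-\gamma_{i+1}$ (standard normal exactly when $\gamma_i,\gamma_{i+1}$ are, by symmetry of the normal law) turns this into $Y\leq aX+\Delta$ with $\Delta=-b$. Proposition \ref{pro:linearConstraintStandardNormal} then applies verbatim: such standard normal $X,Y$ exist iff $a=1$ and $\Delta\geq 0$, with an absolutely continuous joint law when $\Delta>0$. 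The condition $a=1$ is precisely (\ref{eqn:probitCond1beta}), and under $a=1$ one finds $\Delta=-b=\Delta_i$, so $\Delta\geq 0$ becomes (\ref{eqn:probitCond1Delta}); the absolute-continuity claim for $\Delta_i>0$ transfers back to $(\gamma_i,\gamma_{i+1})=(-X,-Y)$ since reflection through the origin preserves absolute continuity.

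The calculations are routine, so I do not anticipate a genuine analytic obstacle; the crux is organisational. The main thing to get right is the sign bookkeeping in the reduction — recognising that the \emph{lower} bound (\ref{eqn:pdfSupport1}) on $(\gamma_i,\gamma_{i+1})$ becomes, after reflection, exactly the \emph{upper} linear constraint of Proposition \ref{pro:linearConstraintStandardNormal}, and checking that the affine constant produced by this manipulation coincides with $\Delta_i$. I would also take care to state the assumption $\beta_i,\beta_{i+1}>0$ explicitly, since it is used both to preserve inequality directions in the implication and to identify $a=1$ with (\ref{eqn:probitCond1beta}).
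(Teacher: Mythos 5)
Your proposal is correct and follows essentially the same route as the paper: the first part is the same chain of inequalities built from (\ref{eqn:toxicLoad}), Lemma \ref{lem:toxicLoadInequalities} with $m=n_{i+1}$, $n=n_i$, and (\ref{eqn:pdfSupport1}); the second part is the same reduction to Proposition \ref{pro:linearConstraintStandardNormal} via $X=-\gamma_i$, $Y=-\gamma_{i+1}$, with your $\Delta=-b$ matching the paper's expression and collapsing to $\Delta_i$ when $a=1$. Your explicit attention to $\beta_i,\beta_{i+1}>0$ and the sign bookkeeping is a presentational refinement, not a different argument.
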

 \begin{proof} Assume that $\gamma_{i+1}\leq\Gamma_{i+1}(t)$. Then we get by (\ref{eqn:toxicLoad}), (\ref{eqn:toxicLoadInequality1}) with $m=n_{i+1}$, $n=n_i$,  and (\ref{eqn:pdfSupport1}) that
 	\begin{multline}
 	\frac{n_{i+1}}{n_i}\frac{\Gamma_i(t)-\alpha_i}{\beta_i}+\left(1-\frac{n_{i+1}}{n_i}\right)\log(t)\\
 		\geq\frac{\Gamma_{i+1}(t)-\alpha_{i+1}}{\beta_{i+1}}
 		\geq \frac{\gamma_{i+1}-\alpha_{i+1}}{\beta_{i+1}}\\
 		\geq \frac{n_{i+1}}{n_i}\frac{\gamma_i-\alpha_i}{\beta_i}+\left(1-\frac{n_{i+1}}{n_i}\right)\log(t)
 	\end{multline}
 	i.e., $\Gamma_i(t)\geq \gamma_i$, which proves the first part. The second part follows from Proposition \ref{pro:linearConstraintStandardNormal}, since equation (\ref{eqn:pdfSupport1}) is equivalent to equation (\ref{eqn:linearConstraintStandardNormal}) with $X=-\gamma_i$, $Y=-\gamma_{i+1}$, $a=(\beta_{i+1}n_{i+1})/(\beta_in_i)$ and
 	\begin{equation*}\label{eqn:pdfSupport1Delta}
 		\Delta=\frac{\beta_{i+1}n_{i+1}}{\beta_in_i}\alpha_i-\alpha_{i+1}
 		-\beta_{i+1}\left(1-\frac{n_{i+1}}{n_i}\right)\log(t),
 	\end{equation*}
 	and $a=1, \Delta\geq0$ is equivalent to equations (\ref{eqn:probitCond1beta}), (\ref{eqn:probitCond1Delta}).
\end{proof}
\begin{theorem}\label{thm:pdfSupport2}
 	Assume that $\Gamma_i(t), \Gamma_{i+1}(t)$ are probit functions defined by (\ref{eqn:Gamma_i}), and $n_{i+1}\geq n_i$. Also assume that $(\gamma_i,\gamma_{i+1})$ is a bivariate random variable such that
 		 \begin{equation}\label{eqn:pdfSupport2}
 			\frac{\gamma_{i+1}-\alpha_{i+1}}{\beta_{i+1}}\geq \frac{\gamma_i-\alpha_i}{\beta_i}+(n_{i+1}-n_i)\log\left(\max_{[0,t]}c\right)
 		\end{equation}
 	almost surely. Then $\gamma_{i+1}\leq\Gamma_{i+1}(t)\implies\gamma_i\leq\Gamma_i(t)$ almost surely. Moreover, there exist standard normal $\gamma_i, \gamma_{i+1}$ satisfying (\ref{eqn:pdfSupport2}) if and only if 
	\begin{equation}\label{eqn:probitCond2beta}
			\beta_{i+1}=\beta_i
		\end{equation}
	and
	\begin{equation}\label{eqn:probitCond2Delta}
			\Delta_i\equiv\alpha_i-\alpha_{i+1}-\beta_i(n_{i+1}-n_i)\log\left(\max_{[0,t]}c\right)\geq0, 
		\end{equation}
	and then if $\Delta_i>0$ there exists $(\gamma_{i},\gamma_{i+1})$ with absolutely continuous joint density.
 \end{theorem}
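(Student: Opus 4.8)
The plan is to mirror the proof of Theorem \ref{thm:pdfSupport1} almost verbatim, replacing the first inequality of Lemma \ref{lem:toxicLoadInequalities} with the second and adjusting the identification of parameters in Proposition \ref{pro:linearConstraintStandardNormal}. The theorem has the same two-part structure: an almost-sure implication, followed by an existence characterization reduced to the proposition.

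For the implication, I would assume $\gamma_{i+1}\leq\Gamma_{i+1}(t)$ and combine the toxic-load identity (\ref{eqn:toxicLoad}) with inequality (\ref{eqn:toxicLoadInequality2}), now applied with $m=n_i$ and $n=n_{i+1}$; this is legitimate precisely because the hypothesis here is $n_{i+1}\geq n_i$ (the reverse of the previous theorem). This gives $(\Gamma_{i+1}(t)-\alpha_{i+1})/\beta_{i+1}\leq(\Gamma_i(t)-\alpha_i)/\beta_i+(n_{i+1}-n_i)\log(\max_{[0,t]}c)$. Chaining this against $\gamma_{i+1}\leq\Gamma_{i+1}(t)$ (using $\beta_{i+1}>0$) and then against the support assumption (\ref{eqn:pdfSupport2}) yields
\[
\frac{\Gamma_i(t)-\alpha_i}{\beta_i}+(n_{i+1}-n_i)\log\Big(\max_{[0,t]}c\Big)
\geq\frac{\gamma_i-\alpha_i}{\beta_i}+(n_{i+1}-n_i)\log\Big(\max_{[0,t]}c\Big),
\]
and cancelling the common term gives $\Gamma_i(t)\geq\gamma_i$, as required.

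For the characterization, I would recast (\ref{eqn:pdfSupport2}) into the linear constraint (\ref{eqn:linearConstraintStandardNormal}) of Proposition \ref{pro:linearConstraintStandardNormal} via the substitution $X=-\gamma_i$, $Y=-\gamma_{i+1}$, which keeps both variables standard normal. Clearing $\beta_{i+1}>0$ and negating puts (\ref{eqn:pdfSupport2}) in the form $Y\leq aX+\Delta$ with $a=\beta_{i+1}/\beta_i$ and $\Delta=(\beta_{i+1}/\beta_i)\alpha_i-\alpha_{i+1}-\beta_{i+1}(n_{i+1}-n_i)\log(\max_{[0,t]}c)$. The proposition then says such standard normal variables exist iff $a=1$ and $\Delta\geq0$; the condition $a=1$ is exactly (\ref{eqn:probitCond2beta}), and once $\beta_{i+1}=\beta_i$ the quantity $\Delta$ reduces to $\Delta_i$ of (\ref{eqn:probitCond2Delta}), so $\Delta\geq0$ is (\ref{eqn:probitCond2Delta}). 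The final clause, existence of an absolutely continuous joint density when $\Delta_i>0$, is the corresponding clause of the proposition. I expect no genuine obstacle here; the only point requiring care is the reversal of the roles of $m$ and $n$ in the lemma relative to Theorem \ref{thm:pdfSupport1}, and the sign bookkeeping in the substitution, both of which are routine.
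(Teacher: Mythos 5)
Your proposal is correct and follows essentially the same route as the paper's own proof: the same chain of inequalities using (\ref{eqn:toxicLoad}) and (\ref{eqn:toxicLoadInequality2}) with $m=n_i$, $n=n_{i+1}$ for the almost-sure implication, and the same reduction to Proposition \ref{pro:linearConstraintStandardNormal} via $X=-\gamma_i$, $Y=-\gamma_{i+1}$, $a=\beta_{i+1}/\beta_i$ and the stated $\Delta$ for the characterization. No gaps.
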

 \begin{proof}[Proof of Theorem \ref{thm:pdfSupport2}]
 	Assume that $\gamma_{i+1}\leq\Gamma_{i+1}(t)$. Then we get by (\ref{eqn:toxicLoad}), (\ref{eqn:toxicLoadInequality2}) with $m=n_i$, $n=n_{i+1}$ and (\ref{eqn:pdfSupport2}) that
 	\begin{multline}
 	\frac{\Gamma_i(t)-\alpha_i}{\beta_i}+(n_{i+1}-n_i)\log\left(\max_{[0,t]}c\right)\\
 		\geq\frac{\Gamma_{i+1}(t)-\alpha_{i+1}}{\beta_{i+1}}
 		\geq \frac{\gamma_{i+1}-\alpha_{i+1}}{\beta_{i+1}}\\
 		\geq \frac{\gamma_i-\alpha_i}{\beta_i}+(n_{i+1}-n_i)\log\left(\max_{[0,t]}c\right)
 	\end{multline}
 	i.e., $\Gamma_i(t)\geq \gamma_i$, which proves the first part. The second part follows from Proposition \ref{pro:linearConstraintStandardNormal}, since equation (\ref{eqn:pdfSupport2}) is equivalent to equation (\ref{eqn:linearConstraintStandardNormal}) with $X=-\gamma_i$, $Y=-\gamma_{i+1}$, $a=\beta_{i+1}/\beta_i$ and 
 	\begin{equation*}
 		\Delta=\frac{\beta_{i+1}}{\beta_i}\alpha_i-\alpha_{i+1}-\beta_{i+1}\left(n_{i+1}-n_i\right)\log\left(\max_{[0,t]}c(t)\right), 
 	\end{equation*}
 	and $a=1, \Delta\geq0$ is equivalent to equations (\ref{eqn:probitCond2beta}), (\ref{eqn:probitCond2Delta}).
 \end{proof}
\begin{remark}
	Note that if $n_{i+1}=n_i$, then the compatibility conditions (\ref{eqn:probitCond1beta}), (\ref{eqn:probitCond1Delta}) and (\ref{eqn:probitCond2beta}), (\ref{eqn:probitCond2Delta}) in the preceding theorems involve only the probit coefficients $\alpha,\beta,n$, not $t$ or $\max c$.
\end{remark}
\bibliographystyle{plain}
\bibliography{copula}
\end{document}